\DeclareMathOperator{\aut}{Aut}
\newcommand{\F}{\mathbb{F}}
\newcommand{\symm}{\mathrm{Sym}}
\newcommand{\alt}{\mathrm{Alt}}
\newcommand{\atlas}{Atlas}
\newcommand{\twistA}{{^2\!A}}
\newcommand{\twistB}{{^2\!B}}
\newcommand{\twistD}{{^2 D}}
\newcommand{\trialD}{{^3 D}}
\newcommand{\twistE}{{^2\!E}}
\newcommand{\twistF}{{^2\!F}}
\newcommand{\twistG}{{^2 G}}
\newcommand{\GL}{\mathrm{GL}}
\newcommand{\SL}{\mathrm{SL}}
\newcommand{\PSL}{\mathrm{PSL}}
\newcommand{\SU}{\mathrm{SU}}
\newcommand{\PSU}{\mathrm{PSU}}
\newcommand{\Sp}{\mathrm{Sp}}
\newcommand{\exctypes}{\mathrm{Exc}}
\newcommand{\form}{\mathbf{f}}
\newcommand{\PGamma}{\mathrm{P}\Gamma}
\newcommand{\POmega}{\mathrm{P}\Omega}
\newcommand{\PDelta}{\mathrm{P}\Delta}
\newcommand{\geomclass}{\mathcal{C}}
\newcommand{\sclass}{\mathcal{S}}
\newcommand{\altcover}{\mathrm{\tilde{A}lt}}
\theoremstyle{plain}
\newtheorem{thm}{Theorem}[section]
\newtheorem{lemma}[thm]{Lemma}
\newtheorem{cor}[thm]{Corollary}
\theoremstyle{definition}
\theoremstyle{remark}
\newtheorem*{remark}{Remark}
\newcommand{\otoprule}{\midrule[\heavyrulewidth]}
\begin{document}

\title[Cross-characteristic representation growth]{Growth of cross-characteristic representations of finite quasisimple groups of Lie type}
\author{Jokke H\"as\"a}
\address{Department of Mathematics, Imperial College London, South Kensington Campus, London SW7 2AZ, United Kingdom}
\email{jokke.hasa@helsinki.fi}

\begin{abstract}
In this paper we give a bound to the number of conjugacy classes of maximal subgroups of any almost simple group whose socle is a classical group of Lie type. The bound is $2n^{5.2}+n\log_2\log_2 q$, where $n$ is the dimension of the classical socle and $q$ is the size of the defining field. To obtain the bound, we first bound the number of projective cross-characteristic representations of simple groups of Lie type as a function of the representation degree. These bounds are computed for different families of groups separately. In the computation, we use information on class numbers, minimal character degrees and gaps between character degrees.
\end{abstract}

%\begin{keyword}
%representation growth \sep maximal subgroups \sep classical groups \MSC{20C33}
%\end{keyword}

\maketitle

\section{Introduction}

Let $H$ be a finite quasisimple group, with $H/Z(H)$ a simple group of Lie type defined over a field of characteristic $p$. We are interested in the number of inequivalent $n$-di\-men\-sion\-al irreducible modular representations of $H$. In \cite{GuralnickLarsenTiep}, R.~Guralnick, M.~Larsen and P.~H.~Tiep obtain an upper bound of $n^{3.8}$ for the number of irreducible representations with dimension at most $n$ in the defining characteristic $p$. They use their result to find an asymptotic bound for the number $m(G)$ of conjugacy classes of maximal subgroups of an almost simple group $G$ with socle a group of Lie type. This bound is given as
\begin{equation*}
m(G)<ar^6+br\log\log q,
\end{equation*}
where $a$ and $b$ are unknown constants, and $r$ and $q$ are the rank of the socle and the size of its defining field, respectively.

In this paper, we sharpen the mentioned result in the case where the socle is a classical group, as follows:

\begin{thm}\label{theorem:main_theorem} Assume $G$ is a finite almost simple group with socle a classical group of dimension $n$ over the field $\F_q$. Let $m(G)$ denote the number of conjugacy classes of maximal subgroups of $G$ not containing the socle. Then
\begin{equation*}
m(G)<2n^{5.2}+n\log_2\log_2 q.
\end{equation*}
\end{thm}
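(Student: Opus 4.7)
The overall plan is to invoke Aschbacher's classification of maximal subgroups of finite classical groups. Every conjugacy class of maximal subgroups of $G$ not containing the socle belongs either to one of the eight geometric classes $\geomclass_1,\dots,\geomclass_8$ of Kleidman--Liebeck, or to the residual class $\sclass$ of nearly simple irreducible subgroups. The total count $m(G)$ decomposes as a sum of contributions from each class, and the theorem follows once each contribution is bounded appropriately.

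For the geometric classes I would quote the explicit descriptions in Kleidman--Liebeck. Each of $\geomclass_1,\dots,\geomclass_4,\geomclass_6,\geomclass_7,\geomclass_8$ contributes at most a polynomial in $n$ of small degree, coming from counting divisors of $n$, partitions into two parts, parabolic types, extraspecial witnesses, and tensor factorisations. The class $\geomclass_5$ of subfield subgroups is parametrised by divisors of $\log_p q$, and so contributes at most $n\log_2\log_2 q$. These contributions are comfortably absorbed into the stated bound, with ample room left over.

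The central and hardest step is bounding $|\sclass|$ by roughly $n^{5.2}$. Up to conjugacy a member of $\sclass$ is determined by a projective absolutely irreducible cross-characteristic representation of degree at most $n$ of some finite quasisimple group $H$ with $H/Z(H)$ simple, so it suffices to bound, uniformly in $H$ and in the (cross-)characteristic, the number of such representations. I would attack this family by family. Only finitely many alternating and sporadic groups admit representations of degree $\leq n$, so their contribution is logarithmic in $n$. For each family of Lie type the argument would combine three ingredients: the class number $k(H)$, which controls the total number of ordinary characters of $H$ and via the decomposition matrix also the number of modular ones; Landazuri--Seitz-type lower bounds on the smallest nontrivial cross-characteristic degree, which restrict the parameters $(r,q')$ of the candidate quasisimple groups; and gap theorems between consecutive character degrees, limiting how many distinct characters of degree $\leq n$ a single $H$ can contribute. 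Summing $k(H)$ over the admissible parameter region, the dominant contribution comes from groups of low rank over large fields, and the balance between $k(H)$ of order $(q')^r$ and minimal degree polynomial in $q'$ is what produces the exponent $5.2$.

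The main obstacle will be keeping the constants and exponents small: the target $2n^{5.2}$ is substantially sharper than what one would get by applying the Guralnick--Larsen--Tiep defining-characteristic bound uniformly, so a common crude argument cannot suffice. Each family of Lie type must be handled with its own estimates using the best available class number and minimal degree data, and the tightest case (probably $\PSL$ or $\PSU$ in low rank) will essentially dictate the final exponent.
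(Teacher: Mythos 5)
Your skeleton (Aschbacher decomposition into geometric classes plus $\sclass$, with $\geomclass_5$ supplying the $n\log_2\log_2 q$ term) matches the paper, but there are genuine gaps in your treatment of $\sclass$. Most importantly, you only consider \emph{cross}-characteristic representations. A subgroup in $\sclass$ can equally well have socle a group of Lie type in the defining characteristic of $G_0$, and that case --- handled in the paper via Steinberg's Tensor Product Theorem for non-restricted representations and via L\"ubeck's low-dimensional results together with the Guralnick--Larsen--Tiep bound $n^{3.8}$ for restricted ones --- is precisely where the dominant term comes from: roughly $2n^{1/3}$ possible ranks, times at most $n^{3.8}$ restricted representations each, times the splitting factor $\sigma=[\PDelta:G_0]\le n$, giving $2n^{5.14}$. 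You have the balance of the argument inverted: the cross-characteristic contribution is \emph{linear} in $n$ (this is the paper's main technical result, $s_n(\mathcal{Q},\ell)<14.1n$), contributing only $15n^2$ after the $\sigma$ factor, so it is the defining-characteristic restricted case, not the interplay of class numbers and Landazuri--Seitz bounds, that dictates the exponent $5.2$. Your remark that the Guralnick--Larsen--Tiep defining-characteristic bound cannot be used is therefore backwards --- it is used essentially unchanged, and the sharpening over their $ar^6+br\log\log q$ comes entirely from the cross-characteristic side.

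Two further points. Your estimate for alternating socles is wrong: the number of $d$ for which $\alt(d)$ admits an $n$-dimensional irreducible projective modular representation is about $2\sqrt{n}$ (coming from the degree-$\frac{1}{2}(d-1)(d-2)$ modules and their spin analogues), not logarithmic, and each such group can carry up to $n^{2.5}$ representations of degree $n$, so this family alone contributes on the order of $2n^4$ after the $\sigma$ factor. And you never account for the conversion from $\PDelta$-classes (or representation-equivalence classes) to $G_0$-classes --- the factor $\sigma$, which is as large as $n$ for linear and unitary socles --- nor for the exceptional graph automorphisms of $\PSL_n(q)$, $\Sp_4(2^k)$ and $\POmega_8^+(q)$, both of which are needed to make the count of $G$-conjugacy classes rigorous.
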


Note that there are no unknown constants left in Theorem \ref{theorem:main_theorem}. To prove the result, we need to study the representation growth of groups of Lie type over fields of characteristic different from $p$. Define $r_n(H,\ell)$ as the number of inequivalent irreducible $n$-dimensional representations of $H$ over the algebraic closure of the finite prime field of characteristic $\ell\not=p$. Also, write $r_n^f(H,\ell)$ for the number of such said representations that are in addition faithful. If $\mathcal{L}$ is a family of finite quasisimple groups of Lie type,
we denote
\begin{equation*}
s_n(\mathcal{L},\ell)=\sum_{H\in\mathcal{L}}r_n^f(H,\ell).
\end{equation*}
The sum is taken over a set of representatives $H$ of isomorphism classes of quasisimple groups belonging to $\mathcal{L}$.
We will present upper bounds for the growth of $s_n(\mathcal{L},\ell)$ for different family of groups of Lie type. The upper bounds will have no dependence on $\ell$.

Regarding classical groups, we concern ourselves with the following families of quasisimple groups:
\begin{center}
\begin{tabular}{ll}
$A_1$ & linear groups in dimension 2 (but see below) \\
$A'$ & linear groups in dimension at least 3 \\
$\twistA$ & unitary groups in dimension at least 3 \\
$B$ & orthogonal groups in odd dimension $\ge 7$ over a field of odd size \\
$C$ & symplectic groups in dimension at least 4 \\
$D$ & orthogonal groups of plus type in even dimension $\ge 8$ \\
$\twistD$ & orthogonal groups of minus type in even dimension $\ge 8$.
\end{tabular}
\end{center}
From family $A_1$, we also exclude the linear groups $\PSL_2(4)\cong\PSL_2(5)\cong\alt(5)$ and $\PSL_2(9)\cong\alt(6)$, as well as all their covering groups.

\begin{thm}\label{theorem:classical_group_A1} For all $n>1$ and for any prime $\ell$, we have
\begin{equation*}
s_n(A_1,\ell)\le n+3.
\end{equation*}
\end{thm}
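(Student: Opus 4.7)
The plan is to reduce the claim to an explicit enumeration of cross-characteristic irreducible representations of the groups $\PSL_2(q)$ and $\SL_2(q)$, whose degrees fall into a very short list.

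First, I would record the structural input: for $q \ge 7$ and $q \ne 9$ the Schur multiplier of $\PSL_2(q)$ is $\mathbb{Z}/\gcd(2,q-1)$, so the quasisimple groups contributing to $A_1$ are exactly $\PSL_2(q)$ together with, for $q$ odd, its double cover $\SL_2(q)$. From the ordinary character tables (see, e.g., Dornhoff) and from Burkhardt's description of the decomposition matrices in characteristic $\ell\ne p$, the degrees of the irreducible $\ell$-modular representations of $\SL_2(q)$ all lie in the short list
\begin{equation*}
 \{1,\; q,\; q+1,\; q-1,\; \tfrac{q+1}{2},\; \tfrac{q-1}{2}\},
\end{equation*}
up to a known perturbation by $\pm 1$ when $\ell \mid q\pm 1$. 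Moreover, for a given degree $d$ in this list, the standard count gives at most $1$ character of degree $q$, roughly $(q-3)/2$ of degree $q+1$, roughly $(q-1)/2$ of degree $q-1$, and at most $2$ of each half-degree $(q\pm 1)/2$.

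Second, for a fixed $n>1$ I would invert these degree formulas. Each of the equations $n=q$, $n=q+1$, $n=q-1$, $n=(q+1)/2$, $n=(q-1)/2$ has at most one prime-power solution, so there are at most five candidate values of $q$ for which some quasisimple group in $A_1$ has an irreducible cross-characteristic representation of dimension $n$. For each such $q$, I would count the faithful irreducible representations of dimension $n$ across $\PSL_2(q)$ and, when $q$ is odd, $\SL_2(q)$. Since a character of $\SL_2(q)$ that factors through $\PSL_2(q)$ is counted only in the $\PSL_2(q)$ contribution, the totals are at most $1$ (Steinberg), $(n-4)/2$ (degree $q+1$ with $q=n-1$), $n/2$ (degree $q-1$ with $q=n+1$), and $2$ each from the two half-dimensional cases. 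Summing yields
\begin{equation*}
 1 + \tfrac{n-4}{2} + \tfrac{n}{2} + 2 + 2 \;=\; n+1,
\end{equation*}
with room to absorb the $\pm 1$ perturbations arising from modular reductions.

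The main technical obstacle is the cross-characteristic modular case: when $\ell$ divides $q(q^2-1)/2$, a decomposition matrix entry may introduce a Brauer character of degree $q-2$ or $q$ as a constituent of a reduction, so that a new value of $q$ could in principle contribute. I would handle this by invoking the ordering of the smallest nontrivial degrees, namely $(q-1)/2<(q+1)/2<q-1<q<q+1$, to argue that any perturbed degree coming from $\PSL_2(q')$ with $q'$ outside the five candidates is strictly larger or smaller than $n$, and by verifying directly from Burkhardt's tables that such a shifted reduction contributes at most $2$ additional characters in total. Combining this with the count above gives the stated bound $s_n(A_1,\ell)\le n+3$, and the exclusion of $\PSL_2(4)\cong\PSL_2(5)\cong\alt(5)$ and $\PSL_2(9)\cong\alt(6)$ is precisely what is needed to avoid the exceptional covers that would spoil the uniform count.
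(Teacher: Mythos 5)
Your overall strategy is the same as the paper's: list the possible degrees of irreducible representations of $\SL_2(q)$, observe that each of the equations $n=q$, $n=q\pm 1$, $n=(q\pm 1)/2$ pins down $q$, and add up the multiplicities. But there is an arithmetic error that matters: your sum is
\begin{equation*}
1+\tfrac{n-4}{2}+\tfrac{n}{2}+2+2 \;=\; 5+\tfrac{2n-4}{2}\;=\;n+3,
\end{equation*}
not $n+1$. Since $n+3$ is exactly the bound to be proved, and it is genuinely attained (equality occurs precisely when $n$, $n+1$, $n-1$, $2n+1$ and $2n-1$ are all prime powers), there is \emph{no} slack left. Your plan to ``absorb the $\pm 1$ perturbations'' and to allow ``at most $2$ additional characters'' from shifted modular reductions would therefore push the total to $n+5$ and the proof would fail. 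The paper closes this gap differently: it shows there are no perturbations to absorb. For $\ell=2$ the element $-1$ acts trivially, so $\SL_2(q)$ ($q$ odd) has no faithful $2$-modular representations; for $q$ and $\ell$ both odd the Sylow $\ell$-subgroups are cyclic, and Dade's theory together with Burkhardt's decomposition matrices shows that every irreducible $\ell$-Brauer character is the restriction of an ordinary irreducible character. Hence the $\ell$-modular degree list and multiplicities are bounded by the complex ones, and it suffices to count for $\ell=0$. You need some statement of this strength, not a ``$\pm 1$ shift plus small correction'' argument.

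Two further points. First, you only use the odd-$q$ multiplicities; for $q$ even the counts are $q/2$ characters of degree $q-1$ and $(q-2)/2$ of degree $q+1$, and since $q=n+1$ or $q=n-1$ may well be even, these must enter the count. The bound then survives only via a parity observation your write-up omits: when $n$ is odd, $n-1$ and $n+1$ are both even, so at most one of them is a prime power, which compensates for the larger multiplicity $q/2=(n+1)/2$. (The paper's table even allows multiplicity $3$ at degree $q$ for odd $q$, and the count still closes only because of this parity interplay.) Second, the generic character table of $\SL_2(q)$ you are quoting is valid only for $q\ge 7$, $q\ne 9$; the paper handles $n\le 250$ by the Hiss--Malle tables and only then assumes $q>250$, whereas you should at least note why small $q$ (beyond the excluded $q\in\{4,5,9\}$) causes no trouble.
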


\begin{remark}
For small $n$, the maximal value of $s_n(A_1,\ell)/n$ over $\ell$ can be computed from the known decomposition tables. It is then possible to state the largest obtainable values of this maximum over all $n$. The three largest ones are $7/6$, $13/12$ and $16/15$. The largest appears for $n=6$ and $n=12$, and is used in the next theorem.
\end{remark}

\begin{thm}\label{theorem:classical_groups} Let $\mathcal{L}$ denote one of the above families of finite quasisimple classical groups. For all $n>1$ and for any prime $\ell$, we have
\begin{equation*}
s_n(\mathcal{L},\ell)\le c_\mathcal{L}\,n
\end{equation*}
where the constants $c_{\mathcal{L}}$ are shown in Table \ref{table:bounds_classical}.

\begin{table}[hbt]
\centering
\[\begin{array}{rcccccccc}
\toprule
\mathcal{L}: &A_1 & A' & \twistA & B & C & D & \twistD \\
\midrule
c_\mathcal{L}: & 7/6 & 1.5484 & 2.8783 & 0.9859 & 2.8750 & 1.5135 & 1.7969 \\
\bottomrule
\end{array}\]
\caption{Bounding constants for classical groups}\label{table:bounds_classical}
\end{table}
\end{thm}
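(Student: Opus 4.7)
The plan is to establish the bound family by family, since the seven families have rather different minimal degrees, class numbers and character-degree structure. For the family $A_1$, I would combine Theorem \ref{theorem:classical_group_A1} with a direct verification for small $n$. Since $n+3\le (7/6)n$ precisely when $n\ge 18$, the bound $n+3$ already handles every $n\ge 18$; for $2\le n\le 17$ one reads off $s_n(A_1,\ell)$ from the known modular character tables of the groups $\SL_2(q)$ and their covers, and checks that the maximal ratio $s_n(A_1,\ell)/n$ is indeed $7/6$, as flagged in the remark after Theorem \ref{theorem:classical_group_A1}.

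For each of the remaining six families I would use a common three-step strategy. Parametrize the quasisimple groups $H\in\mathcal{L}$ by the natural rank parameter $r$ (dimension, or half-dimension of the natural module) and the size $q$ of the defining field. First, apply the Landazuri--Seitz cross-characteristic lower bounds (together with the classical-group refinements by Seitz--Zalesski and Tiep) to get a function $d(H)$ such that every nontrivial projective irreducible $\F_\ell$-representation of $H$ has dimension at least $d(H)$. Since $r_n^f(H,\ell)>0$ forces $d(H)\le n$, only finitely many parameter pairs $(r,q)$ contribute to $s_n(\mathcal{L},\ell)$ for a given $n$: for fixed rank the admissible field sizes grow like $n^{O(1/r)}$, and for fixed $q$ the admissible ranks are only $O(\log n)$.

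Second, bound $r_n^f(H,\ell)$ for each admissible $H$. Two complementary estimates are available: the total number $k(H)$ of $\ell$-regular classes gives a ceiling on the number of representations (growing polynomially in $q$ of degree $r$), while gap estimates on the cross-characteristic character degrees show that very few degrees can fit into a given window $[d(H),n]$. When $n$ is comparable to $d(H)$ the gap bound is sharper; when $d(H)\ll n$ the class-number bound is needed, but in that regime the admissible parameter range is correspondingly small. Third, sum these individual bounds over the admissible $(r,q)$: because $d(H)$ grows at least geometrically in $q$ and in $r$, the resulting sum is dominated by its first term(s) and yields a linear function of $n$, the coefficient being the constant $c_\mathcal{L}$ of Table \ref{table:bounds_classical}.

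The main obstacle, and the part that pins down the precise constants, is the treatment of groups of small rank over small fields: here the asymptotic Landazuri--Seitz bounds and the polynomial class-number estimates are weakest, and it is precisely these small groups that determine $c_\mathcal{L}$ in each family. For these I expect to need explicit character-degree data and decomposition numbers rather than purely asymptotic reasoning, and to handle Schur multipliers and the few exceptional isomorphisms by hand. Once the small cases are verified, the asymptotic machinery will comfortably dispose of all the remaining parameter pairs and give the uniform linear bound $s_n(\mathcal{L},\ell)\le c_\mathcal{L}\,n$.
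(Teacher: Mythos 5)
Your proposal follows essentially the same route as the paper: Landazuri--Seitz--Zalesskii minimal-degree bounds to restrict the admissible $(r,q)$, gap results plus class-number bounds (exact for small rank, Fulman--Guralnick for large rank) to control $r_n^f$, summation over the admissible parameters to obtain linearity, and explicit character/decomposition data for the small-rank, small-field and exceptional-cover cases that actually determine the constants. Your handling of $A_1$ via $n+3\le(7/6)n$ for $n\ge 18$ is also the paper's argument in miniature, so the plan is sound and matches the paper's proof.
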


Bounds in the last theorem are probably far from optimal (except for $A_1$), as can be judged from the known maximal values of $s_n(\mathcal{L},\ell)/n$ for $n\le 250$ (see Table~\ref{table:maximal_Qn_for_small_n} on page \pageref{table:maximal_Qn_for_small_n}).

The next theorem deals with the exceptional types. Let $\mathcal{E}$ denote the family of finite quasisimple groups with simple quotient an exceptional group of Lie type, excluding the groups $G_2(2)'$ and $\twistG_2(3)'$. (These are isomorphic to $\SU_3(3)$ and $\SL_2(8)$, respectively.)

\begin{thm}\label{theorem:exceptional_groups} For all $n>1$ and for any prime $\ell$, we have
\begin{equation*}
s_n(\mathcal{E},\ell)<1.2795n.
\end{equation*}
\end{thm}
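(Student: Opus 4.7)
The plan is to treat each exceptional type separately and sum the contributions over the ten families $G_2$, $F_4$, $E_6$, $E_7$, $E_8$, $\twistB_2$, $\twistG_2$, $\twistF_4$, $\trialD_4$ and $\twistE_6$. Within each family the quasisimple groups are parametrized by the size $q$ of the defining field (with the appropriate parity/exponent restrictions for the twisted types), and the analysis mirrors the family-by-family scheme already used in Theorem~\ref{theorem:classical_groups}.

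For a fixed type, the Landazuri--Seitz--Zalesski lower bound on the minimal faithful cross-characteristic projective degree $d(H)$ is a polynomial in $q$ of known leading order $e$, tabulated in the literature for every exceptional type (e.g.\ $e=6$ for $G_2(q)$, $e=20$ for $E_6(q)$, $e=57$ for $E_8(q)$). Since an $n$-dimensional faithful representation requires $n\ge d(H)$, only those $H$ whose parameter $q$ is bounded roughly by $n^{1/e}$ can possibly contribute to $s_n(\mathcal{E},\ell)$. For each such group the number of faithful $n$-dimensional irreducible representations in any characteristic is at most the class number $k(H)$, which for groups of bounded rank is polynomial in $q$ of degree equal to the Lie rank. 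Since the rank is strictly smaller than $e$ in every exceptional type, summing $k(H)$ over the permissible values of $q$ yields a bound linear in $n$ with an explicit constant depending on the family.

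To push the constant for each family down, I would refine the crude inequality $r_n^f(H,\ell)\le k(H)$ using the same two ingredients exploited in the classical cases: sharper lower bounds on the next few character degrees above $d(H)$, and gaps between consecutive degrees. For values of $n$ only slightly larger than $d(H)$ the character(s) of degree at most $n$ can typically be counted exactly from Lübeck's low-degree tables; for larger $n$ one uses Deligne--Lusztig estimates on the number of unipotent and semisimple characters of bounded degree. The small-parameter Suzuki and Ree groups $\twistB_2(q)$, $\twistG_2(q)$, $\twistF_4(q)$ require separate treatment because their minimal degrees grow more slowly in $q$, but their ranks are also small, so the analogous bounds still work.

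The principal obstacle is achieving the overall bound $1.2795\,n$ after summing all ten family contributions: since this constant is only moderately above $1$, no single family may contribute more than about $0.1\,n$ on average, and loose estimates will not suffice. The dominant contribution will come from the types with largest class number to minimal degree ratio, namely $G_2$, $\twistB_2$ and $\twistG_2$. The final step will be a numerical verification, family by family and prime by prime where necessary, ensuring the sum lies below $1.2795\,n$ for every $n>1$; special attention is needed in the small-$n$ regime dominated by a handful of Atlas-listed covering groups (e.g.\ the exceptional Schur multipliers of $\twistB_2(8)$, $G_2(3)$, $G_2(4)$, $F_4(2)$ and $\twistF_4(2)'$), and in the crossover ranges where several families contribute simultaneously.
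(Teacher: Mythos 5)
Your overall strategy --- use the Landazuri--Seitz--Zalesskii minimal-degree bounds to confine the field size to $q\lesssim n^{1/e}$, bound the number of $n$-dimensional representations of each surviving group by its class number, sum family by family, and finish with a numerical verification that isolates the exceptional Schur covers --- is exactly the strategy of the paper. However, one of your justifications is wrong as stated. What makes $\frac1n\sum_{q\le n^{1/e}}k(H(q))$ bounded when $q$ runs over all prime powers is not ``rank $<e$'' but ``rank $+1\le e$'', since $\sum_{q\le Q}q^r$ has order $Q^{r+1}$ (up to logarithms). This fails for $\twistB_2(q)$: the minimal degree $(q-1)\sqrt{q/2}$ gives $e=3/2$ while $k(\twistB_2(q))=q+3$, so the naive sum over prime powers $q\lesssim n^{2/3}$ has order $n^{4/3}$, which is superlinear. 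Your argument survives only because $\twistB_2(q)$ exists solely for $q$ an odd power of $2$, so the permissible $q$ form a geometric sequence and the sum is $O(n^{2/3})$; this needs to be said explicitly. The paper instead proves a dedicated lemma for $\twistB_2$ and $\twistG_2$ using the generic character tables together with the known decomposition matrices, showing that each $n>250$ arises from at most one $q$ and one degree function and obtaining the sharper bounds $\sqrt n$ and $\sqrt[3]{n}$; either route is adequate, since these families turn out not to be dominant.

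The second point of concern is the small-degree regime, which you correctly flag as delicate but underestimate. For $n\le 250$ the class-number bound is hopeless: several groups ($\trialD_4(2)$, the Tits group, $\twistB_2(8)$, $G_2(3)$, $G_2(4)$, \dots) have representations of degree well below $250$, and their combined class numbers divided by $n$ far exceed $1.2795$. The paper disposes of this entire range by invoking the Hiss--Malle classification of all cross-characteristic representations of quasisimple groups of degree at most $250$; L\"ubeck's tables give complex character degrees and do not by themselves control the $\ell$-modular count, so without Hiss--Malle (or equivalent decomposition-matrix data) your plan does not close for small $n$. Relatedly, the paper's maximum $1.27949$ occurs at $n=251$, the first degree not covered by that classification, and is dominated there by the class numbers of $G_2(5)$, $\trialD_4(2)$, $\trialD_4(3)$ and the exceptional cover $F_4(2)$ --- not by $\twistB_2$ and $\twistG_2$ as you predict --- so that is where your ``crossover'' verification must be concentrated.
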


It is straightforward to add together all constants pertaining to different families to get a bound for the representation growth of groups of Lie type. Let $\mathcal{Q}$ denote the class of all finite quasisimple groups of Lie type, still excluding $\PSL_2(4)\cong\PSL_2(5)$ and $\PSL_2(9)$ (these are counted as alternating groups).

\begin{cor}\label{corollary:final_constant} For all $n>1$ and for any prime $\ell$, we have
\begin{equation*}
s_n(\mathcal{Q},\ell)<14.1n.
\end{equation*}
\end{cor}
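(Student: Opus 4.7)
The plan is to obtain the corollary by straightforward additivity. The quantity $s_n(\mathcal{L},\ell)$ is defined as a sum of faithful representation counts over isomorphism class representatives of quasisimple groups in $\mathcal{L}$, so if $\mathcal{Q}$ is partitioned into subfamilies $\mathcal{L}_1,\dots,\mathcal{L}_k$ (at the level of isomorphism classes of quasisimple groups), then
\begin{equation*}
s_n(\mathcal{Q},\ell)=\sum_{i=1}^k s_n(\mathcal{L}_i,\ell).
\end{equation*}
The main work of the plan is therefore bookkeeping: verifying that the seven classical families $A_1,A',\twistA,B,C,D,\twistD$ together with the exceptional family $\mathcal{E}$ do partition $\mathcal{Q}$ up to isomorphism, and then adding the eight constants.

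First I would argue the partition claim. Every finite quasisimple group of Lie type has a simple quotient isomorphic to one of the groups in precisely one Lie-type family, with the only ambiguities coming from low-rank exceptional isomorphisms. The families as defined already avoid double-counting: $A'$ starts at dimension $3$, $\twistA$ at dimension $3$, $B$ at odd dimension $\ge 7$ with odd $q$, $C$ at dimension $\ge 4$, and $D,\twistD$ at dimension $\ge 8$, while $\mathcal{E}$ explicitly excludes $G_2(2)'\cong\SU_3(3)$ (absorbed into $\twistA$) and $\twistG_2(3)'\cong\SL_2(8)$ (absorbed into $A_1$). The only remaining coincidences $\PSL_2(4)\cong\PSL_2(5)\cong\alt(5)$ and $\PSL_2(9)\cong\alt(6)$ are handled by the exclusion already built into $\mathcal{Q}$.

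Next I would invoke Theorems \ref{theorem:classical_group_A1}, \ref{theorem:classical_groups} and \ref{theorem:exceptional_groups} to bound each summand by $c_\mathcal{L}\,n$, and add the constants:
\begin{equation*}
\tfrac{7}{6}+1.5484+2.8783+0.9859+2.8750+1.5135+1.7969+1.2795 < 14.1.
\end{equation*}
This gives $s_n(\mathcal{Q},\ell)<14.1n$ for all $n>1$, uniformly in $\ell$.

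There is no real obstacle: the inequality follows by arithmetic once the partition is justified. The only point to check carefully is that the individual theorems all hold in the same range $n>1$ with no hidden dependence on $\ell$, which is indeed the content of the preceding statements, so no further analysis is required.
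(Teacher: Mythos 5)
Your proposal is correct and matches the paper's own (essentially one-line) argument: the paper states that the corollary follows by adding together the constants for the seven classical families and the exceptional family, and indeed $\tfrac{7}{6}+1.5484+2.8783+0.9859+2.8750+1.5135+1.7969+1.2795\approx 14.04<14.1$. Your extra care in checking that the families partition $\mathcal{Q}$ up to the low-rank isomorphisms is consistent with how the paper sets up its exclusions, so nothing further is needed.
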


In Section \ref{section:preliminaries} we discuss the results from the literature that we will need for proving the theorems related to representation growth (\ref{theorem:classical_group_A1}, \ref{theorem:classical_groups} and \ref{theorem:exceptional_groups}). Section \ref{section:classical_groups} is devoted to proving Theorems \ref{theorem:classical_group_A1} and \ref{theorem:classical_groups}, and Section~\ref{section:exceptional_groups} to proving Theorem \ref{theorem:exceptional_groups}. The Main Theorem \ref{theorem:main_theorem} is proved in Section~\ref{section:main_theorem}.

\section{Preliminaries}\label{section:preliminaries}

Let $H$ be a finite quasisimple group of Lie type defined over a field $\F_q$. Then $H$ is a factor group of the universal covering group of the simple group $H/Z(H)$. All linear representations of $H$ are included in the representations of the universal covering group, so we may restrict our attention to this group.

The universal covering group is completely determined by its simple quotient, which in turn is determined by its Lie family, rank $r$ and the field size~$q$. We will denote the universal covering group by $H_r(q)$, where $H$ is replaced by the letter of the Lie family in question. For example, $A_2(3)$ is the group $\SL_3(3)$, and $E_6(4)$ is the triple cover of the finite simple group of Lie type $E_6$ over the field of four elements. Notice that this notation differs from the one used in the \atlas{} of Finite Groups (\cite{Atlas}).

Apart from finitely many exceptions, the universal covering group of a simple group of Lie type is obtained as a fixed-point group of a Frobenius morphism of a simply-connected simple algebraic group of the same type. The exceptions (given e.g.\ in \cite[Th.~5.1.4]{BlueBook}) are $A_1(4)$, $A_1(9)$, $A_2(2)$, $A_2(4)$, $A_3(2)$, $\twistA_3(2)$, $\twistA_3(3)$, $\twistA_5(2)$, $B_3(3)$, $C_2(2)$, $C_3(2)$, $D_4(2)$, $\twistE_6(2)$, $F_4(2)$, $G_2(3)$, $G_2(4)$, $\twistB_2(8)$ and $\twistF_4(2)$ (the Tits group).

The proofs below rely mainly on two types of information: results on smallest possible representation degrees of the universal covering groups, and bounds for their conjugacy class numbers. On the first topic, there is a good survey article by P.~H.~Tiep \cite{TiepMinimal}.

Landazuri and Seitz found lower bounds for the representation degrees in \cite{LandazuriMinimal}, and the bounds were subsequently improved by Seitz and Zalesskii~\cite{SeitzZalesskiiMinimal}. These bounds were given for each Lie type as functions of rank and the size of the defining field. The bounds have later been slightly improved by various authors.

It is generally the case that the group $H$ has a few representations of the smallest degree $n_1$, maybe a few also of the degrees $n_1+1$ and $n_1+2$, after which there is a relatively large ``gap'' before the next degrees. Then, after a couple of degrees, there is again a gap before the next one, and so on. The size of the first gap is known for linear, unitary and symplectic groups, and the most recent results can be found in \cite{GuralnickTiepLinear}, \cite{HissMalleUnitary}, \cite{GMSTUnitarySymplectic} and \cite{GuralnickTiepSymplecticEven}. We will constantly refer to degrees \emph{below} and \emph{above the gap} when we talk about these groups, but it will be made clear in the context which degrees belong to which class. For the other classical groups, we take the first gap to exist before the smallest degree, so that with these groups, the phrase ``degrees below the gap'' comes to mean the empty set.

For example, any group $A_r(q)=\SL_{r+1}(q)$ with $r\ge 2$ has irreducible cross-characteristic representations of dimensions
\begin{align*}
& \frac{q^{r+1}-q}{q-1}-\kappa_{r,q,\ell}\ =\ q^r+q^{r-1}+\cdots+q-\kappa_{r,q,\ell} \\
\text{and} \quad & \frac{q^{r+1}-1}{q-1} \hphantom{\mbox{}-\kappa_{r,q,l}}\ =\ q^r+q^{r-1}+\cdots+q+1,
\end{align*}
where $\kappa_{r,q,\ell}$ is either 0 or 1, depending on $r$, $q$ and $\ell$. The difference between these two dimensions is at most two, and they are both said to be ``below the gap''. The next dimension is ``above the gap'', given by a polynomial with degree at least $2r-3$.

Upper bounds for the number of conjugacy classes of classical groups were found by J.~Fulman and R.~Guralnick in \cite{GuralnickConjugacy}. These bounds are all of the form $k(H)\le q^r+B_\mathcal{L}q^{r-1}$, where $B_\mathcal{L}$ is a constant depending on the classical family $\mathcal{L}$ to which $H$ belongs.
The values for $B_\mathcal{L}$ are given in Table \ref{table:class_number_bounds}. For the exceptional groups, as well as classical groups of rank less than 8, one can obtain the precise conjugacy class numbers from Frank Lübeck's online data (\cite{LuebeckPolys}), where he lists all complex character degrees and their multiplicities for many kinds of groups of Lie type with rank at most eight.

\begin{table}[hbt]
\centering
\[
\begin{array}{rcccccc}
\toprule
\mathcal{L}: & A & \twistA & B & C & D & \twistD \\
\midrule
B_\mathcal{L}: & 3 & 15 & 22 & 30 & 32 & 32 \\
\bottomrule
\end{array}
\]
\caption{Fulman--Guralnick bounds for class numbers of classical groups}\label{table:class_number_bounds}
\end{table}

Finally, G.~Hiss and G.~Malle have determined all cross-characteristic representations of quasisimple groups with degree at most 250 (\cite{HissMalle250,HissMalle250Corrigenda}). This enables us to deal with the small degrees separately and assume in the general case that the degree is greater than 250. The Atlas of Finite Groups \cite{Atlas} together with the Atlas of Brauer Characters \cite{ModularAtlas} tell us which groups have all representation degrees below 250, so that these groups can be discarded from consideration. Also the groups with exceptional covering group can mostly be handled with the help of the \atlas es.

For technical reasons, we need to handle the types $A_1$, $\twistB_2$ and $\twistG_2$ separately. The generic complex character tables for these groups are known, and the cross-characteristic decomposition matrices can be found in \cite{BurkhardtPSL}, \cite{BurkhardtSuz}, \cite{HissHabilitation} and \cite{LandrockMichler}.

\section{Classical groups}\label{section:classical_groups}

In this section, we prove Theorems \ref{theorem:classical_group_A1} and \ref{theorem:classical_groups}. First we organise the results on gaps and minimal character degrees available and set up some lemmata for later use. We take $\ell$, the characteristic of the representation space, to be fixed, and suppress the notation as $r_n(H,\ell)=r_n(H)$.

Assume that $H=H_r(q)$ is a universal covering group of a classical simple group of rank $r>1$, defined over $\F_q$, where the characteristic of $\F_q$ is not $\ell$. Suppose also, unless otherwise mentioned, that $H$ is not an exceptional cover, but of simply-connected type. In the general treatment, we also assume for convenience that $H$ is not $A_2(3)$, $A_2(5)$, $A_3(3)$, $A_5(2)$, $A_5(3)$, $\twistA_2(3)$, $\twistA_2(4)$, $\twistA_2(5)$, $C_2(3)$ or $\twistD_4(2)$.
To get upper bounds for those ranks and field sizes that may yield representations of degree $n$, we need to bound the possible representation degrees of $H_r(q)$ from below.

Firstly, there is a small number of small degrees of $H$ below the gap. For fixed rank, these dimensions are given by some polynomials in $q$, whose degrees depend on the rank. The polynomials are listed in various sources and collected in Table \ref{table:classical_smallest_dimensions}, together with multiplicities.

\begin{table}[hbt]
\[
\begin{array}{cccc}
\toprule
\text{group} & \text{degree} & \text{multiplicity} & \text{reference} \\
\otoprule
A_r(q),\,r>1 & \frac{q^{r+1}-q}{q-1}-\genfrac{\{}{\}}{0pt}{}{0}{1} & 1 & \text{\cite{GuralnickTiepLinear}} \\
 & \frac{q^{r+1}-1}{q-1} & (q-1)_{l'}-1 & \\
\midrule
\twistA_r(q) & \frac{q^{r+1}-q(-1)^r}{q+1} & 1 & \text{\cite{HissMalleUnitary}} \\
 & \frac{q^{r+1}+(-1)^r}{q+1} & (q+1)_{l'}-1 & \\
\midrule
C_r(q),\,\text{$q$ odd} & \frac{1}{2}(q^r-1) & 2 & \text{\cite{GMSTUnitarySymplectic}} \\
 & \frac{1}{2}(q^r+1) & 2 & \\
\bottomrule
\end{array}
\]
\caption{Minimal representation degrees of some classical groups. The symbol $\genfrac{\{}{\}}{0pt}{}{0}{1}$ means either 0 or 1, depending on $r$, $q$ and $\ell$.}\label{table:classical_smallest_dimensions}
\end{table}

For each classical type $\mathcal{L}$ in $\{A',\twistA,C\}$, it is straightforward to find a lower bound for all the degree polynomials given in Table \ref{table:classical_smallest_dimensions}. The bounds that will be used are given in Table \ref{table:classical_smallest_dimension_bounds} as $\varphi_{\mathcal{L},r}$. It is also easy to bound the multiplicities from above and we list some bounds in the same table as $\psi_\mathcal{L}$.

Now, if $n$ is a representation degree of one of the groups $H_r(q)$ mentioned in Table~\ref{table:classical_smallest_dimensions}, we know that $n\ge\varphi_{\mathcal{L},r}(q)$. For each $\mathcal{L}$, this gives an upper bound to $q$. For example, if $n$ is a representation degree of $A_r(q)$, then $n\ge q^r$, so $q\le n^{1/r}$. These upper bounds are given in Table~\ref{table:classical_smallest_dimension_bounds} as $q_{\mathcal{L},r}^1(n)$. Similarly, upper bounds can be found for the rank, and these are listed as~$r_{\mathcal{L}}^1(n)$.

\begin{table}[hbt]
\[
\begin{array}{ccccc}
\toprule
\mathcal{L} & \varphi_{\mathcal{L},r}(q) & \psi_\mathcal{L}(q) & q_{\mathcal{L},r}^1(n) & r_{\mathcal{L}}^1(n) \\
\otoprule
A' & q^r & q & n^{1/r} & \log_2 n \\
\midrule
\twistA & (q-1)^r,\,\frac{1}{2}q^r & q & n^{1/r}+1,\,(2n)^{1/r} & \log_2 n+1 \\
\midrule
C & \frac{1}{3}q^r & 2 & & \log_3 n+1 \\
\bottomrule
\end{array}
\]
\caption{Bounds for minimal representation degrees, their multiplicities and related quantities for some classical groups}\label{table:classical_smallest_dimension_bounds}
\end{table}

The following two lemmata will be used in bounding the number of representations with degree below the gap.

\begin{lemma} Write $H(q)$ for a universal covering groups of one of the types $A_r$, $\twistA_r$, or $C_r$, with fixed Lie family and rank. Let $f_1$ and $f_2$ be two polynomials expressing representation degrees of $H(q)$ below the gap, as given in the second column of  Table~\ref{table:classical_smallest_dimensions}. If $f_1(q_1)=f_2(q_2)$ for some prime powers $q_1$ and $q_2$, then $q_1=q_2$ and $f_1=f_2$.
\end{lemma}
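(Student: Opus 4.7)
The plan is to combine two facts: within each fixed type and rank, the polynomials appearing in Table~\ref{table:classical_smallest_dimensions} pairwise differ by integer constants of absolute value at most~$2$, while each individual polynomial strictly increases in $q$ with increments that exceed those constants on all admissible prime powers.

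First I would compute the pairwise differences explicitly. For type $A_r$ the three polynomials $\tfrac{q^{r+1}-q}{q-1}-\kappa$ with $\kappa\in\{0,1\}$ and $\tfrac{q^{r+1}-1}{q-1}$ take the values $N-1$, $N$ and $N+1$ at any fixed~$q$, where $N=q^r+q^{r-1}+\dots+q$, so pairwise differences are constants in $\{\pm 1,\pm 2\}$. For $\twistA_r$ the two polynomials differ by $(-1)^r$, an absolute constant of value $1$. For $C_r$ the two polynomials $\tfrac12(q^r\pm 1)$ differ by~$1$. Let $c$ denote the largest absolute pairwise difference, so $c\le 2$ in every case.

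Second I would check that every polynomial $f$ in the list is strictly increasing in $q$ with $f(q')-f(q)>c$ whenever $q<q'$ are admissible prime powers. For $A_r$ with $r\ge 2$, $q\ge 2$, we have $N(q+1)-N(q)\ge(q+1)^r-q^r\ge rq^{r-1}\ge 4$, and all three polynomials share the same increment. For $\twistA_r$, expanding $\tfrac{q^{r+1}-q(-1)^r}{q+1}$ as the alternating sum $q^r-q^{r-1}+\dots\pm q$ gives a leading term that already forces $f(q+1)-f(q)\ge 2q\ge 4$ for $r\ge 2$, $q\ge 2$. For $C_r$, $q$ is restricted to odd prime powers so $q'\ge q+2$, and $\tfrac12\bigl((q+2)^r-q^r\bigr)\ge 2q+2\ge 8$ for $r\ge 2$, $q\ge 3$.

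Combining the two steps: if $f_1(q_1)=f_2(q_2)$ with $q_1>q_2$, then $f_1(q_1)-f_1(q_2)=f_2(q_2)-f_1(q_2)$ is a constant of absolute value at most~$c$, contradicting the increment bound. Hence $q_1=q_2$, and then $f_1(q_1)=f_2(q_1)$ together with the nonzero pairwise differences from step one forces $f_1=f_2$. The main (and quite mild) obstacle is verifying the increment bound at the smallest admissible values of $q$ and~$r$, which amounts to a handful of direct evaluations.
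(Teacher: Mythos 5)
Your proof is correct and follows essentially the same route as the paper's: first pin down $q$ by showing that values of the degree polynomials at distinct prime powers cannot collide, then use the nonzero constant pairwise differences to force $f_1=f_2$. The only (immaterial) difference is in the first step, where the paper sandwiches $f_i(q)$ strictly between $q^r$ and $(q+1)^r$ so that $q$ is the unique integer in $\bigl(n^{1/r}-1,\,n^{1/r}\bigr)$, whereas you compare the increment of $f$ between consecutive admissible prime powers with the maximal pairwise difference.
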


\begin{proof}
Suppose first that $H(q)$ is of type $A_r$ with $r\ge 2$. We see from Table~\ref{table:classical_smallest_dimensions} that, for $i\in\{1,2\}$, the following holds:
\begin{equation*}
q^r<f_i(q)<(q+1)^r \quad \text{for all $q$}.
\end{equation*}
Hence, if $n=f_i(q)$, we must have $n^{1/r}-1<q<n^{1/r}$. There is, however, at most one integer $q$ that fits, so we must have $q_1=q_2$. Also, the difference between any two minimal degree polynomials for $A_r$ is uniformly either 1 or 2, so that $f_1(q)=f_2(q)$ only if $f_1=f_2$.

The cases for $\twistA_r$ and $C_r$ are handled similarly.
\end{proof}

\begin{lemma}\label{lemma:bound_below_gap} Let $H(q)$ be as in the previous lemma. Writing $r_n^<(H(q),\ell)$ for the number of irreducible $\ell$-modular representations of $H(q)$ with dimension $n$ below the gap, we have
\begin{equation*}
\sum_q r_n^<\bigl(H(q),\ell\bigr)<\psi_\mathcal{L}\bigl(q_{\mathcal{L},r}^1(n)\bigr),
\end{equation*}
with $\psi_\mathcal{L}$ and $q_{\mathcal{L},r}^1(n)$ as in Table \ref{table:classical_smallest_dimension_bounds}.
\end{lemma}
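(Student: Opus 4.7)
The plan is to combine the previous lemma with the explicit multiplicity data of Table~\ref{table:classical_smallest_dimensions}. First I would note that whenever $r_n^<(H(q),\ell)>0$, the dimension $n$ must equal $f(q)$ for one of the degree polynomials $f$ listed in the second column of Table~\ref{table:classical_smallest_dimensions}. The previous lemma then guarantees that the pair $(q,f)$ is uniquely determined by $n$. Consequently the sum $\sum_q r_n^<(H(q),\ell)$ has at most one nonzero term, corresponding to a unique prime power $q_0$, and at that $q_0$ only one polynomial $f_0$ from the table can realise $n$, because distinct polynomials in a single row of the table differ pointwise by at most $2$.

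Second, I would identify $r_n^<(H(q_0),\ell)$ as the multiplicity of the representations corresponding to $f_0$, as recorded in the third column of Table~\ref{table:classical_smallest_dimensions}. These multiplicities are $1$ or $(q_0-1)_{\ell'}-1$ for $A'$, $1$ or $(q_0+1)_{\ell'}-1$ for $\twistA$, and $2$ for $C$; in each case the multiplicity is bounded above by $\psi_\mathcal{L}(q_0)$ via the trivial estimate $(q_0\pm 1)_{\ell'}\le q_0\pm 1$. Finally, the minimal-degree inequality $n\ge\varphi_{\mathcal{L},r}(q_0)$ translates into $q_0\le q_{\mathcal{L},r}^1(n)$, and monotonicity of $\psi_\mathcal{L}$ gives the desired bound $\psi_\mathcal{L}\bigl(q_{\mathcal{L},r}^1(n)\bigr)$.

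The main obstacle will be to ensure that the inequality in the conclusion is \emph{strict}. For $A'$ strictness is essentially automatic, since $(q_0-1)_{\ell'}-1<q_0=\psi_{A'}(q_0)$. For $\twistA$ and (especially) $C$, where the tabulated multiplicity can a priori match $\psi_\mathcal{L}(q_0)$, strictness instead has to come from the observation that $\varphi_{\mathcal{L},r}$ is not quite the true minimal degree, so that $q_0<q_{\mathcal{L},r}^1(n)$ strictly and one can either invoke the strict monotonicity of $\psi_\mathcal{L}$ (where applicable) or fall back on a direct numerical check. This case-by-case refinement is the most delicate part of the argument, but it is routine once the bookkeeping has been set up.
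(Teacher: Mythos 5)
Your argument is essentially identical to the paper's proof: the previous lemma pins down a unique pair $(q,f)$ realising $n$ below the gap, the tabulated multiplicity is bounded by $\psi_\mathcal{L}(q)$, and monotonicity of $\psi_\mathcal{L}$ together with $q\le q_{\mathcal{L},r}^1(n)$ finishes the estimate. Your extra worry about strictness of the inequality is more care than the paper itself takes (its proof only establishes a non-strict bound, and for type $C$, where $\psi_C\equiv 2$ equals the actual multiplicity, strictness is indeed questionable), but this does not affect how the lemma is used downstream.
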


\begin{proof} By the previous lemma, for each $n$ there can be at most one value of~$q$, such that $n$ is a degree of an irreducible representation of $H(q)$ below the gap. Also, there can be at most one minimal degree polynomial $f$ such that $f(q)=n$. The multiplicity corresponding to $f$ is bounded from above by $\psi_\mathcal{L}(q)$. Furthermore, the polynomial $\psi_\mathcal{L}$ is non-decreasing, so an upper bound is obtained by considering an upper bound to $q$.
\end{proof}

After the few smallest representation degrees there is the first gap, and the next degrees are significantly larger. Lower bounds for these degrees that lie above the gap, is here called the \emph{gap bounds}, and they are again polynomials in $q$ with degree depending on $r$. We list the relevant information on the gap bounds in Table~\ref{table:gap_bounds}. (The bounds might not hold for some groups that were excluded in the beginning of this section.) Notice that for the groups not appearing in Table~\ref{table:classical_smallest_dimensions}, we take the gap bound to be the Landazuri--Seitz--Zalesskii bound (or one of its refinements) for the smallest dimension of a non-trivial irreducible representation.

\begin{table}[hbt]
\[
\begin{array}{lccc}
\toprule
\text{group} & \text{gap bound} & \text{remarks} & \text{ref.} \\
\otoprule
A'_r(q) & (q^r-1)\left(\frac{q^{r-1}-q}{q-1}-\genfrac{\{}{\}}{0pt}{}{0}{1}\right) & r\ge 4 & \text{\cite{GuralnickTiepLinear}} \\
 & (q-1)(q^2-1)/\gcd(3,q-1) & r=2 & \\
 & (q-1)(q^3-1)/\gcd(2,q-1) & r=3 & \\
\midrule
\twistA_r(q) & \frac{(q^{r+1}+1)(q^r-q^2)}{(q^2-1)(q+1)}-1 & \text{$r$ even} & \text{\cite{GMSTUnitarySymplectic}} \\ \addlinespace[2pt]
r\ge 4 & \frac{(q^{r+1}-1)(q^r-q)}{(q^2-1)(q+1)} & \text{$r$ odd} & \\
\midrule
\twistA_r(q) & \frac{1}{6}(q-1)(q^2+3q+2) & r=2,\,3\mid(q+1) & \text{\cite{HissMalleUnitary}} \\ \addlinespace[2pt]
r<4 & \frac{1}{3}(2q^3-q^2+2q-3) & r=2,\,3\nmid(q+1) & \\ \addlinespace[2pt]
 & \frac{(q^2+1)(q^2-q+1)}{\gcd(2,q-1)}-1 & r=3 & \\
\midrule
B_r(q) & \frac{(q^r-1)(q^r-q)}{q^2-1} & q=3,\,r\ge 4 & \text{\cite{HoffmanMinimal}} \\ \addlinespace[2pt]
 & \frac{q^{2r}-1}{q^2-1}-2 & q>3 &  \\
\midrule
C_r(q) & \frac{(q^r-1)(q^r-q)}{2(q+1)} & & \text{\cite{GMSTUnitarySymplectic, GuralnickTiepSymplecticEven}} \\
\midrule
D_r(q) & \frac{(q^r-1)(q^{r-1}-1)}{q^2-1} & q\le 3 & \text{\cite{HoffmanMinimal}} \\ \addlinespace[2pt]
 & \frac{(q^r-1)(q^{r-1}+q)}{q^2-1}-2 & q>3 & \\
\midrule
\twistD_r(q) & \frac{(q^r+1)(q^{r-1}-q)}{q^2-1}-1 & r\ge 6 & \text{\cite{HoffmanMinimal}} \\
 & 1026 & r=4,\,q=4 & \\
 & 151 & r=5,\,q=2 & \\
 & 2376 & r=5,\,q=3 & \\
\bottomrule
\end{array}
\]
\caption{Gap bounds for the classical groups}\label{table:gap_bounds}
\end{table}

In Table \ref{table:gap_bound_estimates}, we list as $\Gamma_{\mathcal{L},r}(q)$ some lower bounds for the gap bounds. As with degrees below the gap, these bounds yield upper bounds for those values of $q$, for which $n$ may be a representation degree above the gap. The bounds for $q$ are listed as $q_{\mathcal{L},r}^2(n)$. Similarly, we get upper bounds for the ranks and list those as $r_\mathcal{L}^2(n)$.

\begin{table}[hbt]
\[
\begin{array}{lcccc}
\toprule
\mathcal{L} & \Gamma_{\mathcal{L},r}(q) & q_{\mathcal{L},r}^2(n) & r_\mathcal{L}^2(n) & \text{remarks} \\
\otoprule
A' & q^{2r-2} & n^{1/(2r-2)} & \frac{1}{2}\log_2 n+1 & r\ge 4 \\
 & \frac{1}{4}q^3 & (4n)^{1/3} & & r=2,\,q\ge 7 \\ \addlinespace[2pt]
 & \frac{1}{3}q^4 & (3n)^{1/4} & & r=3,\,q\ge 4 \\
\midrule
\twistA & \frac{1}{2}q^{2r-2} & (2n)^{1/(2r-2)} & \frac{1}{2}\log_2 n+\frac{3}{2} & r\ge 4 \\ \addlinespace[2pt]
 & \frac{1}{6}q^3 & (6n)^{1/3} & & r=2 \\ \addlinespace[2pt]
 & \frac{2}{5}q^4 & (5n/2)^{1/4} & & r=3,\,q\ge 4 \\
\midrule
B & q^{2r-2} & n^{1/(2r-2)} & \frac{1}{2}\log_3 n+1 & r\ge 3 \\
\midrule
C & \frac{1}{4}q^{2r-1} & (4n)^{1/(2r-1)} & \frac{1}{2}\log_2 n+\frac{3}{2} & \\
\midrule
D, \twistD & q^{2r-3} & n^{1/(2r-3)} & \frac{1}{2}\log_2 n+\frac{3}{2} & r\ge 4 \\
\bottomrule
\end{array}
\]
\caption{Bounds for the gap bounds and related quantities for classical groups}\label{table:gap_bound_estimates}
\end{table}

The following lemmata are used in bounding the number of representations with degree above the gap. The first one is a simple estimate relieving us from having to sum over all integers when we cannot determine which of them are prime powers.

\begin{lemma} Suppose $K$ is a strictly increasing function on the integers and $Q$ is a positive integer. For summing the values of $K(q)$ over prime powers $q$, we have the following estimate:
\begin{equation*}
\sum_{q\le Q}K(q)\ <\ K(Q)+\frac{1}{2}\sum_{i=3}^{Q-1}K(i)+\sum_{i=1}^{[\log_2 Q]}K(2^i).
\end{equation*}
Here $[x]$ denotes the integral part of $x$.
\end{lemma}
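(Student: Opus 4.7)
The plan is to decompose the sum over prime powers $q\le Q$ into (i) the powers of $2$, which are $2^1,\dots,2^{[\log_2 Q]}$ and together contribute exactly $\sum_{i=1}^{[\log_2 Q]}K(2^i)$, matching the last sum on the right-hand side, and (ii) the odd prime powers, each of which is an odd integer at least $3$. It therefore suffices to show that
\[
\sum_{\substack{q\text{ odd prime power}\\ q\le Q}}K(q)\ <\ K(Q)+\tfrac{1}{2}\sum_{i=3}^{Q-1}K(i).
\]
I would first bound the left-hand side from above by $\sum_{3\le i\le Q,\,i\text{ odd}}K(i)$, since every odd prime power is in particular such an integer.

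The core technique is a pairing argument driven by the strict monotonicity of $K$: pair each odd integer $2k+1$ with the next even integer $2k+2$, using $K(2k+1)<K(2k+2)$. Over any range $[3,2m]$ containing equal numbers of odd and even integers, summing these pairwise inequalities gives
\[
\sum_{\substack{i\text{ odd}\\ 3\le i\le 2m-1}}K(i)\ <\ \sum_{\substack{i\text{ even}\\ 4\le i\le 2m}}K(i)\ <\ \tfrac{1}{2}\sum_{i=3}^{2m}K(i).
\]

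The main obstacle is the boundary at $Q$, which must be handled slightly differently depending on the parity of $Q$. If $Q$ is odd, then the odd integers in $[3,Q-2]$ pair cleanly with the even integers in $[4,Q-1]$, yielding the half-sum bound for everything except possibly $Q$ itself, whose contribution is then absorbed by the explicit $K(Q)$ term on the right. If $Q$ is even, the relevant odd integers lie in $[3,Q-1]$; the odd integers in $[3,Q-3]$ pair with the even integers in $[4,Q-2]$, while the leftover term $K(Q-1)$ is absorbed against $K(Q)$ via $K(Q-1)<K(Q)$ (together with the mild positivity of $K$ that holds in all applications in this paper). Combining the two cases yields the required strict inequality.
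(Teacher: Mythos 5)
Your proof is correct and follows essentially the same route as the paper's: split the prime powers into powers of $2$ and odd integers $\ge 3$, pair each odd $i$ with $i+1$ using the strict monotonicity of $K$ to obtain the half-sum, and absorb the boundary term into $K(Q)$ by cases on the parity of $Q$. You are in fact slightly more careful than the paper in flagging that the final absorption needs $K$ to be nonnegative near $Q$ (the paper uses this silently when it compares $\tfrac{1}{2}K(Q)$ with $K(Q)$ in the even case), a hypothesis satisfied by every $K$ to which the lemma is applied.
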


\begin{proof} Firstly, each $q$ can be either odd or a power of two. The binary powers are handle by the second sum on the right hand side.

For odd values of $q$, we have two possibilities. If $Q$ is even, the integers from 3 to $Q$ can be partitioned into pairs $(2l-1,2l)$, and we have $K(2l)>K(2l-1)$ for all $l$, since $K$ is strictly increasing. Thus,
\begin{equation*}
\sum_{\substack{q\le Q \\ \text{$q$ odd}}} K(q)
\le\sum_{l=2}^{Q/2} K(2l-1)
=\frac{1}{2}\sum_{l=2}^{Q/2} 2K(2l-1)
<\frac{1}{2}\sum_{i=3}^Q K(i).
\end{equation*}
On the other hand, if $Q$ is odd, we have by the same argument
\begin{equation*}
\sum_{\substack{q\le Q \\ \text{$q$ odd}}}K(q)<K(Q)+\frac{1}{2}\sum_{i=3}^{Q-1}K(i).
\end{equation*}
In both cases, we see that the desired inequality holds.
\end{proof}

\begin{lemma}\label{lemma:bound_above_gap} Keep the notation of the previous lemma.
Assume additionally that $K$ is a polynomial function, and denote $K(x)=\alpha_0+\alpha_1 x+\cdots+\alpha_s x^s$. Then we have
\begin{equation*}
\sum_{q\le Q} K(q)<K(Q)+\frac{1}{2}\int_0^Q K(x)\,dx
+\alpha_0\log_2 Q+\sum_{j=1}^s\frac{2^j \alpha_j Q^j}{2^j-1}.
\end{equation*}
\end{lemma}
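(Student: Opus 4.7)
My plan is to apply the preceding lemma and then bound the two resulting sums separately: the arithmetic sum $\frac{1}{2}\sum_{i=3}^{Q-1}K(i)$ by the integral, and the geometric sum $\sum_{i=1}^{[\log_2 Q]}K(2^i)$ by splitting into monomials.

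For the first sum, since $K$ is a polynomial that is strictly increasing on the relevant range (and hence non-negative on $[0,Q]$ in the situations where the lemma will be applied), I would use the standard integral comparison for non-decreasing functions: $K(i)\le\int_i^{i+1}K(x)\,dx$, which on summing from $i=3$ to $i=Q-1$ gives $\sum_{i=3}^{Q-1}K(i)\le\int_3^Q K(x)\,dx\le\int_0^Q K(x)\,dx$. Multiplying by $\tfrac12$ yields the required contribution $\tfrac12\int_0^Q K(x)\,dx$.

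For the second sum, I would substitute $K(x)=\alpha_0+\alpha_1 x+\cdots+\alpha_s x^s$ and interchange the order of summation, writing
\begin{equation*}
\sum_{i=1}^{[\log_2 Q]}K(2^i)=\alpha_0\,[\log_2 Q]+\sum_{j=1}^s\alpha_j\sum_{i=1}^{[\log_2 Q]}2^{ij}.
\end{equation*}
The $j=0$ contribution is bounded by $\alpha_0\log_2 Q$. For each $j\ge 1$, the inner sum is the geometric sum
\begin{equation*}
\sum_{i=1}^{N}2^{ij}=\frac{2^j\bigl(2^{Nj}-1\bigr)}{2^j-1}
\end{equation*}
where $N=[\log_2 Q]$, so $2^{Nj}\le Q^j$, giving the bound $\tfrac{2^j(Q^j-1)}{2^j-1}<\tfrac{2^j Q^j}{2^j-1}$. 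Summing over $j$ produces the last term in the claimed inequality.

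Combining the two bounds with the leading $K(Q)$ from the previous lemma yields the desired estimate. The only real care point is verifying the monotonicity/non-negativity hypothesis needed for the integral comparison, but this is automatic for the polynomial lower bounds $K$ that arise in the applications (those coming from Tables \ref{table:classical_smallest_dimension_bounds} and \ref{table:gap_bound_estimates}), which all have non-negative coefficients. No single step is a serious obstacle; the proof is essentially a direct calculation.
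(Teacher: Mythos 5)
Your proof is correct and follows the same route as the paper: apply the preceding lemma, bound the odd-$q$ sum by $\tfrac12\int_0^Q K(x)\,dx$ via integral comparison, and evaluate the binary-power sum by expanding $K(2^i)$ into monomials and using the geometric series with $2^{[\log_2 Q]}\le Q$. The paper's own proof is exactly this (it dismisses the first two terms as ``obvious'' and only writes out the geometric-sum step), and your remark that the integral comparison implicitly uses the non-negativity of the coefficients in the intended applications is a fair, if minor, point of added care.
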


\begin{proof} The result follows directly from the lemma above. The first two terms on the right hand side are obvious. For the last two, write $M=[\log_2 Q]$, and consider
\begin{equation*}
\sum_{i=1}^{M}K(2^i)=\sum_{i=1}^{M}\left(\alpha_0+\sum_{j=1}^s\alpha_j 2^{ij}\right)=\alpha_0 M+\sum_{j=1}^s\alpha_j \sum_{i=1}^{M}(2^j)^i.
\end{equation*}
It only remains to apply the formula for a geometric sum.
\end{proof}

We now embark upon finding the growth bounds for classical groups. For technical reasons, we will handle type $A_1$ separately in the end of the section.

\begin{proof}[Proof of Theorem \ref{theorem:classical_groups} (excluding $A_1$)] Fix a classical family $\mathcal{L}$, and let $H_r(q)$ denote the universal covering group of type $\mathcal{L}$ with rank $r$, defined over $\F_q$. We take $\ell$, the characteristic of the representation space, to be fixed, and suppress the notation by writing $r_n(H,\ell)=r_n(H)$ and $s_n(H,\ell)=s_n(H)$.

We need to find an upper bound for the ratio
\begin{equation*}
Q_\mathcal{L}(n)=\frac{1}{n}\sum_r\sum_q r_n(H_r(q)).\footnotemark
\end{equation*}
For each classical type $\mathcal{L}$, some small ranks and field sizes may be disregarded because of isomorphisms between the small groups. For example, for family $B$ we take the smallest applicable rank to be 3, since $B_1(q)\cong A_1(q)$ and $B_2(q)\cong C_2(q)$ for every $q$. Also, we will not consider groups of type $B$ in even characteristic, as $B_r(2^k)\cong C_r(2^k)$ for all $r$. Furthermore, the groups in $A_1$ are handled in a separate proof after this one. For each type $\mathcal{L}$, the smallest applicable rank and field size are denoted $r_0=r_0(\mathcal{L})$ and $q_0=q_0(\mathcal{L})$, respectively, and they are recalled in Table~\ref{table:smallest_r_and_q}. \footnotetext{In this proof, sums over $r$ and $q$ are to be taken over the positive integers and prime powers, respectively.% However, those pairs $(r,q)$ are skipped which correspond to groups that have been explicitly excluded from consideration.
}

\begin{table}[hbt]
\[
\begin{array}{rcccccc}
\toprule
\mathcal{L}: & A' & \twistA & B & C & D & \twistD \\
\midrule
r_0: & 2 & 2 & 3 & 2 & 4 & 4 \\
q_0: & 2 & 2 & 3 & 2 & 2 & 2 \\
\bottomrule
\end{array}
\]
\caption{Smallest applicable ranks and field sizes}\label{table:smallest_r_and_q}
\end{table}

Let us first compute the maxima of $Q_\mathcal{L}(n)$ for $n\le 250$. Here we use the tables of Hiss and Malle from \cite{HissMalle250Corrigenda}. The results are shown in Table~\ref{table:maximal_Qn_for_small_n}. As these values are all less than the corresponding bounds given in the statement of the theorem, we may henceforth assume that $n>250$.

\begin{table}[hbt]
\[
\begin{array}{cccccc}
\toprule
A' & \twistA & B & C & D & \twistD \\
\midrule
1/2 & 2/3 & 2/27 & 3/13 & 1/8 & 1/33 \\
\bottomrule
\end{array}
\]
\caption{Maximal values of $Q_\mathcal{L}(n)$ for $n\le 250$}\label{table:maximal_Qn_for_small_n}
\end{table}

The following is a general discussion of the structure of the proof, which will later be carried out case-by-case for each family. In the general discussion, we silently assume that the universal covering groups are of simply-connected type, and moreover, do not appear in the list of groups excluded in the beginning of the section.

We first consider those groups that may yield a representation degree $n$ below the gap. This only applies to families $A'$, $\twistA$ and $C$. We let $r_n^<(H)$ denote the number of irreducible representations of $H$ with dimension $n$ below the gap, and desire a bound for the ratio
\begin{equation*}
Q_\mathcal{L}^<(n)=\frac{1}{n}\sum_r\sum_q r_n^<(H_r(q)).
\end{equation*}

For a given rank $r$, let $f_r$ denote the polynomial giving the smallest representation degree as exhibited in Table \ref{table:classical_smallest_dimensions}. Now, if $f_r(q_0)>n$, there is no group of type $\mathcal{L}$ having rank $r$ that would have an irreducible representation of dimension $n$. On the other hand, if $f_r(q_0)\le n$, the number of such representations is at most $\psi(q^1_r(n))$, according to Lemma~\ref{lemma:bound_below_gap}. We obtain the following upper bound for $Q_\mathcal{L}^<(n)$:
\begin{equation}\label{equation:below_gap_exact}
R_\mathcal{L}^<(n)=\frac{1}{n}\sideset{}{'}\sum_{r\ge r_0}\psi(q^1_r(n)).
\end{equation}
The prime $(')$ stands for taking the summand that corresponds to $r$ into account only when $f_r(q_0)\le n$ (this makes the sum finite).

The values of the expression $R_\mathcal{L}^<(n)$ can be computed explicitly for all values up to a desired $n_0$. After $n_0$, notice that $\psi$ is linear and $q^1_r(n)$ has the form $(an)^{1/r}$. This means that the function $r\mapsto\psi(q^1_r(n))$ is decreasing and convex for all $n>n_0$, so we may use a trapezial estimate to bound the sum in \eqref{equation:below_gap_exact} from above (see Figure~\ref{trapeziumFigure}). This estimate is explained in the following.

The sum in~\eqref{equation:below_gap_exact} is taken up to the biggest rank $r$ for which $f_r(q_0)\le n$. Denote this rank $\hat{r}(n)$. Since $f_r(q_0)$ is strictly increasing in $r$, the value of $\hat{r}(n)$ is also increasing in $n$. So, for $n>n_0$, we are taking the sum at least up to $r=\hat{r}(n_0)$, denoted simply $R_0$. Write
\begin{equation*}
M_r(n)=\psi(q^1_r(n)),
\end{equation*}
and let $r^1(n)$ denote the upper bound for the rank given in Table~\ref{table:classical_smallest_dimension_bounds}. Setting up a trapezium as in Figure~\ref{trapeziumFigure}, we get the following upper bound to $R_\mathcal{L}^<(n)$:
\begin{equation}\label{equation:below_gap}
\bar{R}_\mathcal{L}^<(n)=\frac{1}{n}\left(M_{r_0}(n)+\frac{r^1(n)-r_0}{2}\bigl(M_{r_0+1}(n)+M_{R_0}(n)\bigr)\right).
\end{equation}
Observe that $r^1(n)$ has the form $\log n+b$, whence it follows that $\bar{R}_\mathcal{L}^<(n)$ will be decreasing after some $n$. We will take care to choose $n_0$ above this point for each $\mathcal{L}$.

\begin{figure}
\centering
\includegraphics{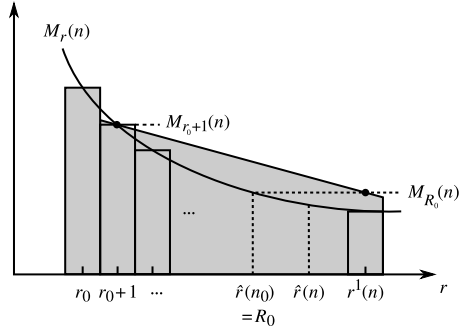}
\caption{The trapezium used to obtain \eqref{equation:below_gap}. Notice how the convexity of $M_r(n)$ guarantees that the shaded area is large enough to cover all the rectangles, even if $R_0=r^1(n)$.} \label{trapeziumFigure}
\end{figure}

Let us then write $r^>_n(H_r(q))$ for the number of irreducible representations of $H_r(q)$ with dimension $n$ greater than the gap bound. We are looking for a bound to
\begin{equation*}
Q_\mathcal{L}^>(n)=\frac{1}{n}\sum_r\sum_q r_n^>(H_r(q)).
\end{equation*}
This bound will consist of two terms: $R_\mathcal{L}^1(n)$ and $R_\mathcal{L}^2(n)$, corresponding to small and large ranks, respectively.

For small ranks, that is, with $r$ below some suitably chosen $r_1$, we can use precise conjugacy class numbers obtainable from Lübeck's online data (\cite{LuebeckPolys}) to bound $r_n^>(H_r(q))$ from above. Write $k_r(q)$ for the class number of $H_r(q)$, and define
\begin{equation}\label{equation:small_ranks_exact}
R_\mathcal{L}^1(n)=\frac{1}{n}\sum_{r=r_0}^{r_1-1}\sideset{}{'}\sum_{q\ge q_0}k_r(q).
\end{equation}
The inner (dashed) sum is taken over those $q$, for which the gap bound of $H_r(q)$ is at most $n$, and there are only finitely many such $q$.

The exact values of $R_\mathcal{L}^1(n)$ can be computed for $n$ up to any desired $n_0$. For larger values, we use a polynomial upper bound $K_r(q)=\sum_j\alpha_{r,j}q^j$ for the class number of $H_r(q)$. Although the class numbers are themselves given by polynomials in $q$, there can be finitely many different polynomials, each applicable to a certain congruence class of $q$. To obtain an upper bound, we simply choose the maximal one among these polynomials and leave out all negative terms. Then all the coefficients $\alpha_{r,j}$ will be non-negative integers, and it can be observed that the degree of $K_r(q)$ equals $r$.

Let $q_r^2(n)$ be the upper bound to $q$ as given in Table~\ref{table:classical_smallest_dimension_bounds}. The expression $R_\mathcal{L}^1(n)$ can now be bounded from above, in accordance with Lemma \ref{lemma:bound_above_gap}, by
\begin{multline}\label{equation:small_ranks}
\bar{R}_\mathcal{L}^1(n)=\frac{1}{n}\sum_{r=r_0}^{r_1-1}\left(K_r\bigl(q_r^2(n)\bigr)+\frac{1}{2}\int_0^{q_r^2(n)}K_r(x)\,dx \right. \\
\left. {}+\alpha_0\log_2 q_r^2(n)+\sum_{j\ge 1}\frac{\alpha_{r,j}\bigl(2q_r^2(n)\bigr)^j}{2^j-1}\right).
\end{multline}
The important thing to note here is that for all types $\mathcal{L}$ and all ranks $r$, the largest power of $n$ inside the outermost brackets is at most 1. (This can be seen by comparing the expression for $q_r^2(n)$ with the degree of $K_r(q)$, which is $r$.) With all coefficients $\alpha_{r,j}$ non-negative, we can then be sure that $\bar{R}_\mathcal{L}^1(n)$ is decreasing in $n$ when $\log_2 q_r^2(n)/n$ is. Since $q_r^2(n)$ is of the form $(an)^{1/d_r}$ for some $a$ and $d_r$, this happens after $n\ge e/a$.

For large ranks, that is, for $r\ge r_1$, we need to use the class number bounds of Fulman and Guralnick (\cite{GuralnickConjugacy}). With $B_\mathcal{L}$ given in Table \ref{table:class_number_bounds}, these bounds have the form $b_r(q)=q^r+B_\mathcal{L}q^{r-1}$.
We write
\begin{equation*}
R_\mathcal{L}^2(n)=\frac{1}{n}\sum_{r\ge r_1}
\sideset{}{'}\sum_{q\ge q_0}\left(q^r+B_\mathcal{L}q^{r-1}\right).
\end{equation*}
The latter sum is again taken over those $q$, for which the gap bound of $H_r(q)$ is at most $n$.

Also the values of $R_\mathcal{L}^2(n)$ can be computed exactly up to $n_0$. After that, we use an upper bound that results from substituting $b_r(q)$ in place of $K_r(q)$ in equation \eqref{equation:small_ranks}. When simplified, this becomes
\begin{equation*}
\frac{1}{n}\sum_{r=r_1}^{[r^2(n)]}\Lambda_r(n),
\end{equation*}
where
\begin{equation*}
\Lambda_r(n)=\frac{1}{2r+2}q_r^2(n)^{r+1}+\left(\frac{2^{r+1}-1}{2^r-1}+\frac{B_\mathcal{L}}{2r}\right)q_r^2(n)^r+\frac{(2^r-1)B_\mathcal{L}}{2^{r-1}-1}q_r^2(n)^{r-1}.
\end{equation*}

Notice that $q_r^2(n)$ has the form $(an)^{1/(br-c)}$ for some positive integers $b$ and~$c$. This makes every term in $\Lambda_r(n)$ decreasing in $r$ (for any fixed $n$), except possibly the final $(an)^\frac{r-1}{br-c}$, which is increasing if and only if $b>c$. In this exceptional case (which occurs only when $\mathcal{L}=C$), we replace $(an)^\frac{r-1}{br-c}$ by its limit $(an)^{1/b}$.\label{page:limit_for_C} Then we can use the simple estimate
\begin{equation}\label{equation:large_ranks}
\bar{R}_\mathcal{L}^2(n)=\frac{r^2(n)-r_1+1}{n}\,\Lambda_{r_1}(n)
\end{equation}
as an upper bound to $R_\mathcal{L}^2(n)$. Here, $r^2(n)$ is the upper bound for the rank as given in Table \ref{table:gap_bound_estimates}, and it has the form $a\log n+b$. Since the highest power of $n$ in $\Lambda_{r_1}(n)$ is less than 1, this means that $\bar{R}_\mathcal{L}^2(n)$ will become decreasing after some~$n$.

Finally, the ratio $Q_\mathcal{L}(n)$ is bounded above by the sum of $R_\mathcal{L}^<(n)$, $R_\mathcal{L}^1(n)$ and $R_\mathcal{L}^2(n)$. We shall finish the proof by computing the maximal values of these quantities for each classical family $\mathcal{L}$ separately. We also add the values arising from groups with exceptional covers individually in each case.

\medskip

\textbf{Case $\mathcal{L}=A'$.} Let us first assume that $(r,q)$ is none of $(2,2)$, $(2,3)$, $(2,4)$, $(2,5)$, $(3,2)$, $(3,3)$, $(5,2)$ or $(5,3)$. According to equation \eqref{equation:below_gap_exact} and Table \ref{table:classical_smallest_dimension_bounds}, we have
\begin{equation*}
R^<_{A'}(n)=\sideset{}{'}\sum_{r\ge r_0}\frac{1}{n^{1-1/r}},
\end{equation*}
where the summand corresponding to $r$ is taken into account if and only if $n\ge 2^{r+1}-3$ (except for $r=5$ where $q\ge 4$, so $n$ has to be at least $1363$).
The values of $R^<_{A'}(n)$ will be computed explicitly up to $n_0=650000$. At $n_0$, the biggest $r$ in the sum is 18, so looking at equation~\eqref{equation:below_gap}, we have
\begin{equation*}
R^<_{A'}(n)<\bar{R}^<_{A'}(n)=\frac{1}{n^{1/2}}+\left(\frac{1}{2}\log_2 n-1\right)\left(\frac{1}{n^{2/3}}+\frac{1}{n^{17/18}}\right)
\end{equation*}
for $n>n_0$. This expression is decreasing after $n_0$.

Let us then bound $Q^>_{A'}(n)$. We set $r_1=5$. For ranks less than $r_1$, we use the conjugacy class numbers obtainable from Lübeck's data to determine the values of $R^1_{A'}(n)$.
The values of $R^1_{A'}(n)$ are computed up to $n_0$. After this, we substitute $q_{A',r}^2(n)$ from Table \ref{table:gap_bound_estimates} into equation \eqref{equation:small_ranks} to get a decreasing expression $\bar{R}^1_{A'}(n)$ that can be used as an upper bound.

We move on to large ranks. Write $\Gamma_r(q)$ for the gap bound of $A_r(q)$. For rank 5, we have assumed that $q\ge 4$, and $\Gamma_5(4)=84909$. Thus, the smallest appearing gap bound is $\Gamma_6(2)=1827$. For values of $n$ from 1827 up to $n_0$, we will compute $R^2_{A'}(n)$, and afterwards we can substitute values from Tables~\ref{table:class_number_bounds} and \ref{table:gap_bound_estimates} into equation \eqref{equation:large_ranks} to obtain
\begin{equation*}
\bar{R}^2_{A'}(n)=\left(\frac{1}{2}\log_2 n-3\right)\left(\frac{1}{12n^{1/4}}+\frac{723}{310n^{3/8}}+\frac{31}{5n^{1/2}}\right).
\end{equation*}
This expression is decreasing for $n>n_0$.

Now, for $n$ from 251 up to $n_0$, we can compute the exact values of
\begin{equation*}
F(n)=R^<_{A'}(n)+R^1_{A'}(n)+R^2_{A'}(n),
\end{equation*}
and this bounds $Q_{A'}(n)$ from above. We are still, however, avoiding the exceptional cases. From the \atlas es, we find that the universal covering groups $A_2(2)$, $A_2(3)$, $A_2(4)$, $A_2(5)$ and $A_3(2)$ have no irreducible representations of dimension over 250, so we may forget about them. For $A_3(3)$, we extract the necessary data from the \atlas es for directly computing $r_n(A_3(3))$ for all~$n$. We find that the maximal value of $r_n(A_3(3))/n$ over all $n$ is less than 0.01924.

For the remaining two groups, we use the following information obtainable from \cite{GuralnickTiepLinear} and from the conjugacy class numbers of the groups, the latter of which are 60 for $A_5(2)$ and 396 for $A_5(3)$.

\begin{center}
\begin{tabular}{ccc}
\toprule
group & degree & multiplicity (at most) \\
\otoprule
$A_5(2)$ & $\ge 526$ & 57 \\
\midrule
$A_5(3)$ & 362\text{ or }363 & 1 \\
 & 364 & 1 \\
 & $\ge 6292$ & 393 \\
\bottomrule
\end{tabular}
\end{center}
\vspace{0.2cm}

Having added to $F(n)$ the values of $r_n(A_3(3))/n$ (for all $n$), $57/n$ for $n\ge 526$, $1/n$ for $n\in\{362,363,364\}$, and $393/n$ for $n\ge 6292$, we find that the biggest total sum below $n_0$ appears at $n=251$, and is less than 1.5484. More precisely, we have $R^<_{A'}(251)=0.115$, $R^1_{A'}(251)=1.435$, and $R^2_{A'}(251)=r_{251}(A_3(3))=0$.

For $n>n_0$, we use
\begin{equation*}
\bar{F}(n)=\bar{R}^<_{A'}(n)+\bar{R}^1_{A'}(n)+\bar{R}^2_{A'}(n)
\end{equation*}
as a decreasing upper bound to $F(n)$. We have $\bar{F}(n_0)<1.5267$, so even with the additions coming from the exceptional cases, we see that $F(n)<F(251)$ when $n>n_0$. Since according to Table~\ref{table:maximal_Qn_for_small_n}, we have $Q_{A'}(n)<F(251)$ also for $n\le 250$, we know that the obtained upper bound works for all $n$.

\textbf{Case $\mathcal{L}=\twistA$.} We assume that $(r,q)$ is none of $(2,2)$ (soluble), $(2,3)$, $(2,4)$, $(2,5)$, $(3,2)$, $(3,3)$ or $(5,2)$. Proceeding in the same way as in the case  $\mathcal{L}=A'$, we read from Table \ref{table:classical_smallest_dimension_bounds} that
\begin{equation*}
R^<_{2A}(n)=\sideset{}{'}\sum_{r\ge r_0}\left(\frac{1}{n^{1-1/r}}+\frac{1}{n}\right),
\end{equation*}
where each summand is taken into account when $n$ is at least
$(2^{r+1}-2)/3$, if $r$ is even, or $(2^{r+1}-1)/3$, if $r$ is odd.
The values of $R^<_{2A}(n)$ will be computed explicitly up to $n_0=80000$. At $n_0$, the biggest rank is 16, so we use
\begin{equation*}
\bar{R}_{2A}^<(n)=\frac{\sqrt{2}}{n^{1/2}}+(\log_2 n-1)\left(\frac{1}{(2n)^{2/3}}+\frac{1}{(2n)^{15/16}}\right).
\end{equation*}
(Here we applied the second value given for $q_{2A,r}^1(n)$ in Table \ref{table:classical_smallest_dimension_bounds}.) This function is decreasing for $n>n_0$.

To bound $Q^>_{2A}(n)$, we set $r_1=7$. For ranks less than $r_1$, we use Lübeck's class number polynomials to compute $R^1_{2A}(n)$ up to $n=n_0$.
After this, we use $\bar{R}^1_{2A}(n)$, defined in \eqref{equation:small_ranks}, as a decreasing upper bound for $R^1_{2A}(n)$.

For $r\ge r_1$, the smallest gap bound is 3570 (the corresponding group is $\twistA_7(2)$). For values of $n$ from 3570 up to $n_0$, we will compute $R^2_{2A}(n)$, and afterwards we will use
\begin{equation*}
\bar{R}^2_{2A}(n)=(\log_2 n-9)\left(\frac{1}{16\cdot(2n)^{1/3}}+\frac{5475}{1778\cdot(2n)^{5/12}}+\frac{635}{21\cdot(2n)^{1/2}}\right).
\end{equation*}
This is decreasing in $n$ for $n>n_0$.

For the exceptional cases, $\twistA_2(3)$, $\twistA_2(4)$, $\twistA_2(5)$ and $\twistA_3(2)$ have all degrees below 250, so this group can be ignored. The values of $r_n\bigl(\twistA_3(3)\bigr)/n$ can be computed using the \atlas es, and the maximum is 0.03572. Finally, group $\twistA_5(2)$ has 131 non-trivial conjugacy classes. Now, the values of
\begin{equation*}
F(n)=R^<_{2A}(n)+R^1_{2A}(n)+R^2_{2A}(n)+\frac{r_n\bigl(\twistA_3(3)\bigr)}{n}+\frac{131}{n}
\end{equation*}
are computed up to $n_0$, and the maximal value is $F(272)<2.8783$. (We have $R^<_{2A}(272)=0.17$, $R^1_{2A}(272)=2.24$ and \mbox{$R^2_{2A}(272)=r_{272}(\twistA_3(3))=0$}.) After $n_0$, the upper bound
\begin{equation*}
\bar{F}(n)=\bar{R}^<_{2A}(n)+\bar{R}^1_{2A}(n)+\bar{R}^2_{2A}(n)+0.03572+131/n
\end{equation*}
is decreasing, and $\bar{F}(n_0)<2.873$.

\textbf{Case $\mathcal{L}=B$.} For the orthogonal groups, we have no gap results. Assume that $(r,q)$ is not $(3,3)$. Also, we are assuming that $r\ge 3$ and $q\ge 3$, since otherwise we would be in coincidence with the symplectic groups.
We set $r_1=5$. For ranks less than $r_1$, we compute $R^1_B(n)$ up to $n_0=58000$.
When $n>n_0$, we use $\bar{R}^1_B(n)$ as a decreasing upper bound to $R^1_B(n)$.

For $r\ge r_1$, the smallest gap bound is 7260 (the corresponding group is $B_5(3)$). For values of $n$ from 7260 up to $n_0$, we can compute $R^2_B(n)$, and afterwards we use the upper bound
\begin{equation*}
\bar{R}^2_B(n)=\left(\frac{1}{2}\log_3 n-3\right)\left(\frac{1}{12n^{1/4}}+\frac{656}{155n^{3/8}}+\frac{682}{15n^{1/2}}\right).
\end{equation*}
This is decreasing for $n>n_0$.

The exceptional case $B_3(3)$ has 87 non-trivial conjugacy classes.
The values of
\begin{equation*}
F(n)=R^1_B(n)+R^2_B(n)+\frac{87}{n}
\end{equation*}
are computed up to $n_0$, and the maximal value is $F(780)<0.9859$. (We have $R^1_B(780)=0.88$ and $R^2_B(780)=0$.) After $n_0$, we apply
\begin{equation*}
\bar{F}(n)=\bar{R}^1_B(n)+\bar{R}^2_B(n)+\frac{87}{n},
\end{equation*}
which is decreasing, and has $\bar{F}(n_0)<F(780)$.

\textbf{Case $\mathcal{L}=C$.} Assume $(r,q)$ is none of $(2,2)$, $(2,3)$ or $(3,2)$. Below the gap bound given in Table \ref{table:gap_bounds}, representation degrees exist only for groups with odd $q$. Referring to Table \ref{table:classical_smallest_dimension_bounds}, we have
\begin{equation*}
R^<_C(n)=a_n\cdot\frac{2}{n},
\end{equation*}
where $a_n$ is the number of ranks $r\ge 2$ such that $n\ge(3^r-1)/2$.
The values of $R^<_C(n)$ will be computed up to $n_0=140000$. By using the bounds in Table~\ref{table:classical_smallest_dimension_bounds} to approximate $a_n\le\log_3(n)$, we obtain the upper bound
\begin{equation*}
\bar{R}_C^<(n)=\frac{2\log_3 n}{n}.
\end{equation*}
This is decreasing for $n>n_0$.

For $Q^>_C(n)$, we set $r_1=7$. For ranks less than $r_1$, we compute $R^1_C(n)$ up to $n_0$, and when $n>n_0$, we use $\bar{R}^1_C(n)$ as a decreasing upper bound to $R^1_C(n)$.

For $r\ge r_1$, the smallest gap bound is 2667 (the corresponding group is $C_7(2)$). For values of $n$ from 2667 up to $n_0$, we can compute $R^2_C(n)$, and afterwards we use the upper bound
\begin{equation*}
\bar{R}^2_C(n)=(\log_2 n-9)\left(\frac{1}{8\cdot(4n)^{5/13}}+\frac{7380}{889\cdot(4n)^{6/13}}+\frac{5080}{21n^{1/2}}\right).
\end{equation*}
(Note that the last term is chosen according to the discussion on page \pageref{page:limit_for_C}.) This bound is decreasing for $n>n_0$.

For the exceptional cases, $C_2(2)$ and $C_2(3)$ have all degrees below 250. The values of $r_n(C_3(2))/n$ can be computed from the \atlas es, and the maximum is 0.01072. The values of
\begin{equation*}
F(n)=R^<_C(n)+R^1_C(n)+R^2_C(n)+\frac{r_n(C_3(3))}{n}
\end{equation*}
are computed up to $n_0$, and the maximal value is $F(288)=2.8750$. (More precisely, $R^<_C(288)=1/36$, $R^1_C(288)=205/72$ and $R^2_C(288)=r_{288}(C_3(3))=0$.) After $n_0$, the upper bound $\bar{F}(n)=\bar{R}^<_C(n)+\bar{R}^1_C(n)+\bar{R}^2_C(n)$ is decreasing, and $\bar{F}(n_0)<F(288)$.

\textbf{Case $\mathcal{L}=D$.} Assume $(r,q)$ is not $(4,2)$. There are no gap results. We set $r_1=8$. For ranks less than $r_1$, we compute $R^1_D(n)$ up to $n_0=222000$. When $n>n_0$, we use $\bar{R}^1_D(n)$ as a decreasing upper bound to $R^1_D(n)$.

For $r\ge r_1$, the smallest gap bound is 11048 (the corresponding group is $D_8(2)$). For values of $n$ from 11048 up to $n_0$, we can compute $R^2_D(n)$, and afterwards we use the upper bound
\begin{equation*}
\bar{R}^2_D(n)=(\log_2 n-11)\left(\frac{1}{36n^{4/13}}+\frac{1021}{510n^{5/13}}+\frac{4080}{127n^{6/13}}\right).
\end{equation*}
This is decreasing for $n>n_0$.

The exceptional group $D_4(2)$ is covered in the modular \atlas, so we can compute the values of $r_n(D_4(2))/n$, the maximum of which is 0.005792. The values of
\begin{equation*}
F(n)=R^1_D(n)+R^2_D(n)+\frac{r_n(D_4(2))}{n}
\end{equation*}
are computed up to $n_0$, and the maximal value is $F(298)<1.5135$. (We have $R^1_D(298)=F(298)$ and $R^2_D(298)=r_{298}(D_4(2))=0$.) After $n_0$, the upper bound $\bar{F}(n)=\bar{R}^1_D(n)+\bar{R}^2_D(n)+0.005792$ is decreasing, and $\bar{F}(n_0)<F(298)$.

\textbf{Case $\mathcal{L}=\twistD$.} Assume $(r,q)$ is not $(4,2)$. There are no gap results. We set $r_1=7$. For ranks less than $r_1$, we compute $R^1_{2D}(n)$ up to $n_0=220000$. When $n>n_0$, we use $\bar{R}^1_{2D}(n)$ as a decreasing upper bound to $R^1_{2D}(n)$.

For $r\ge r_1$, the smallest gap bound is 2663 (the corresponding group is $\twistD_7(2)$). For values of $n$ from 2663 up to $n_0$, we can compute $R^2_{2D}(n)$, and afterwards we use the upper bound
\begin{equation*}
\bar{R}^2_{2D}(n)=(\log_2 n-9)\left(\frac{1}{32n^{3/11}}+\frac{3817}{1778n^{4/11}}+\frac{2032}{63n^{5/11}}\right).
\end{equation*}
This is decreasing for $n>n_0$.

The character degrees of $\twistD_4(2)$ are given in the \atlas es. The maximum of $r_n(\twistD_4(2))/n$ is 0.004202. The values of
\begin{equation*}
F(n)=R^1_{2D}(n)+R^2_{2D}(n)+\frac{r_n(\twistD_4(2))}{n}
\end{equation*}
are computed up to $n_0$, and the maximal value is $F(251)<1.7969$. (We have $R^1_{2D}(251)=F(251)$ and $R^2_{2D}(251)=r_{251}(\twistD_4(2))=0$.) After $n_0$, the upper bound $\bar{F}(n)=\bar{R}^1_{2D}(n)+\bar{R}^2_{2D}(n)+0.004202$ is decreasing, and $\bar{F}(n_0)<F(251)$.

\medskip

We have thus checked that all the bounds given in the statement of the theorem hold, except possibly for $\mathcal{L}=A_1$.
\end{proof}

We still need to check the case of rank one linear groups.

\begin{proof}[Proof of Theorem \ref{theorem:classical_group_A1}] We do not take groups $A_1(4)\cong A_1(5)$ or $A_1(9)$ into account, as they are isomorphic to alternating groups.

For $n\le 250$, the bounds given in the statement of the theorem hold according to the tables of Hiss and Malle (\cite{HissMalle250}). Moreover, it can be verified that the largest value of $s_n(A_1)/n$ for $n\le 250$ is $7/6$, and this can be obtained only at $n=6$ and $n=12$. Similarly, the second largest value $13/12$ can be obtained only at $n=24$, and the third largest value $16/15$ only at $n=30$.

We may now assume that $q>250$, since the maximal degree of an irreducible representation of $A_1(q)$ is $q+1$. With this assumption, the universal cover of $A_1(q)$ is $\SL_2(q)$. For this group, the complex character degrees and their multiplicities are listed in Table \ref{table:characters_SL2}.

\begin{table}[hbt]
\[
\begin{array}{ccccc}
\toprule
\multicolumn{2}{c}{\text{$q$ even}} & \quad & \multicolumn{2}{c}{\text{$q$ odd}} \\
\text{degree} & \text{multiplicity} && \text{degree} & \text{multiplicity} \\
\midrule
q-1 & q/2 && q-1 & (q-1)/2 \\
q & 1 && q & 3 \\
q+1 & (q-2)/2 && q+1 & (q-3)/2 \\
 & && (q-1)/2 & 2 \\
 & && (q+1)/2 & 2 \\
\bottomrule
\end{array}
\]
\caption{Non-trivial complex character degrees of $\SL_2(q)$}\label{table:characters_SL2}
\end{table}

The\label{proof_for_A1} cross-characteristic decomposition numbers for the simple groups of type $\PSL_2(q)$ are given in \cite{BurkhardtPSL}. If $q$ is even, $\SL_2(q)$ is isomorphic to $\PSL_2(q)$, and if $\ell$ is even, the scalar $-1\in\SL_2(q)$ acts trivially in the representation, so $\SL_2(q)$ has no faithful representations. Assume then that $q$ and $\ell$ are odd. When $q$ is not a power of $\ell$, the Sylow $\ell$-subgroups of $\SL_2(q)$ are cyclic. Now Dade's theorems can be used (see e.g.\ \cite[§68]{Dornhoff}), and in this case they tell us that the decomposition numbers are all either 0 or 1, there are at most two irreducible Brauer characters in each block, and the set of irreducible $\ell$-Brauer characters is a subset of the irreducible complex characters (restricted to $p$-regular elements). Therefore, it is enough to consider the case with $\ell=0$.

We can read off from the character table that for any degree $n$, there are at most $n+3$ characters of this degree, of any groups of type $\SL_2$. (Equality is obtained only if $n$, $n+1$, $n-1$, $2n+1$ and $2n-1$ are all prime powers.) Furthermore, when $n\ge 251$, we have $(n+3)/n<1.012<7/6$, so the upper bound obtained for $n\le 250$ holds globally. This proves Theorem~\ref{theorem:classical_group_A1}, and also completes the proof of Theorem~\ref{theorem:classical_groups} above.
\end{proof}

\section{Exceptional groups}\label{section:exceptional_groups}

In this section we shall prove Theorem \ref{theorem:exceptional_groups}. For universal covering groups $H(q)$ of exceptional Lie type, the rank is always bounded and there are no gap results. We will use Lübeck's conjugacy class numbers and the (sharpened) Landazuri--Seitz--Zalesskii bounds for the minimal representation degrees. The latter are listed in Table~\ref{table:exceptional_smallest_dimensions}, where $\Phi_k$ denotes the $k$'th cyclotomic polynomial in $q$. As in the previous section, these lower bounds lead to upper bounds for the values of $q$ for which $H(q)$ can have a representation of degree $n$. Such upper bounds that will be used later are listed in Table~\ref{table:exceptional_smallest_dimension_bounds} as $q_H^1(n)$.

\begin{table}[hbt]
\[
\begin{array}{lccc}
\toprule
\text{group} & \text{bound for rep.\ degree} & \text{remark} & \text{ref.} \\
\otoprule
\twistB_2(q) & (q-1)\sqrt{q/2} & & \text{\cite{LandazuriMinimal}} \\
\midrule
\trialD_4(q) & q^5-q^3+q-1 & & \text{\cite{MMT}} \\
\midrule
E_6(q) & q(q^4+1)(q^6+q^3+1)-1 & & \text{\cite{HoffmanTypeE}} \\
\midrule
\twistE_6(q) & (q^5+q)(q^6-q^3+1)-2 & & \text{\cite{MMT}} \\
\midrule
E_7(q) & q\Phi_7\Phi_{12}\Phi_{14}-2 & & \text{\cite{HoffmanTypeE}} \\
\midrule
E_8(q) & q\Phi_4^2\Phi_8\Phi_{12}\Phi_{20}\Phi_{24}-3 & & \text{\cite{HoffmanTypeE}} \\
\midrule
F_4(q) & \frac{1}{2}q^7(q^3-1)(q-1) & q\text{ even} & \text{\cite{LandazuriMinimal}} \\
 & q^6(q^2-1) & q\text{ odd} & \text{\cite{SeitzZalesskiiMinimal}} \\
\midrule
\twistF_4(q) & q^4(q-1)\sqrt{q/2} & & \text{\cite{LandazuriMinimal}} \\
\midrule
G_2(q) & q^2(q^2+1) & q\equiv 0\text{ (mod 3)} & \text{\cite{TiepMinimal}} \\
 & q^3 & q\equiv 1\text{ (mod 3)} & \\
 & q^3-1 & q\equiv 2\text{ (mod 3)} & \\
\midrule
\twistG_2(q) & q(q-1) & & \text{\cite{LandazuriMinimal}} \\
\bottomrule
\end{array}
\]
\caption{Lower bounds for representation degrees of the universal covering groups of exceptional groups of Lie type}\label{table:exceptional_smallest_dimensions}
\end{table}

\begin{table}[hbt]
\[
\begin{array}{ccc}
\toprule
H & q_H^1(n) & \text{remark} \\
\otoprule
\trialD_4 & (4/3\cdot n)^{1/5} & \\
E_6 & n^{1/11} & \\
\twistE_6 & (3/2\cdot n)^{1/11} & q\ge 3 \\
E_7 & n^{1/17} & \\
E_8 & n^{1/29} & \\
F_4 & (9/8\cdot n)^{1/8} & \\
\twistF_4 & (2n)^{2/11} & q\ge 8 \\
G_2 & (125/124\cdot n)^{1/3} & q\ge 5 \\
\bottomrule
\end{array}
\]
\caption{Upper bounds for such $q$ for which $H(q)$ of exceptional Lie type may have a representation of degree $n$. (See the text for details.)}\label{table:exceptional_smallest_dimension_bounds}
\end{table}

For technical reasons, we shall deal with the Suzuki and Ree types $\twistB_2$ and $\twistG_2$ separately. Write therefore $\exctypes'$ for the set of remaining exceptional Lie types (i.e., the set of symbols $\trialD_4$, $E_6$, $\twistE_6$, $E_7$, $E_8$, $F_4$, $\twistF_4$ and $G_2$).

Let $H(q)$ be a simply-connected universal covering group of a simple group of an exceptional Lie type $H\in\exctypes'$. Mimicking the previous section, we denote
\begin{equation*}
Q_\mathcal{E}(n)=\frac{1}{n}\sum_{H\in\exctypes'}\sum_q r_n\bigl(H(q)\bigr).
\end{equation*}

Write $f_H(q)$ for the lower bound of the smallest representation degree of $H(q)$ as given in Table \ref{table:exceptional_smallest_dimensions}. Now, $Q_\mathcal{E}(n)$ is bounded above by
\begin{equation*}
R_\mathcal{E}(n)=\frac{1}{n}\sum_{H\in\exctypes'}\sideset{}{'}\sum_{q}k_H(q),
\end{equation*}
where $k_H(q)$ is the conjugacy class number of $H(q)$, and the dashed sum is taken over those $q$ for which $H(q)$ is defined and $f_H(q)\le n$.

For large $n$, we shall need the estimate $R_\mathcal{E}(n)<\bar{R}_\mathcal{E}(n)$, where $\bar{R}_\mathcal{E}(n)$ is defined analogously to $\bar{R}_\mathcal{L}^1(n)$ on page \pageref{equation:small_ranks}, using $q^1_H(n)$ from Table~\ref{table:exceptional_smallest_dimension_bounds} instead of $q^2_{\mathcal{L},r}(n)$. The expression $\bar{R}_\mathcal{E}(n)$ becomes decreasing eventually. The main reason for this is that the ratio $\deg(k_H)/(\deg(f_H)+1)$ is at most 1, except for the two excluded types. (Note that for $H=\twistF_4$, $f_H$ is not actually a polynomial in $q$; in this case we say that $\deg(f_H)=11/2$.)

We will now establish the claimed bound for the representation growth of groups of exceptional Lie type. Let us first deal with the excluded types.

\begin{lemma}\label{lemma:suzuki_ree} For the Suzuki and Ree groups, if $n>250$, we have
\begin{equation*}
\sum_{q}r_n\bigl(\twistB_2(q)\bigr)<\sqrt{n},
\end{equation*}
and
\begin{equation*}
\sum_{q}r_n\bigl(\twistG_2(q)\bigr)<\sqrt[3]{n}.
\end{equation*}
\end{lemma}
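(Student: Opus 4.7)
The plan is to use the explicit character tables of the Suzuki groups $\twistB_2(q)$ and Ree groups $\twistG_2(q)$ together with the cross-characteristic decomposition matrices in \cite{BurkhardtSuz} and \cite{LandrockMichler}. The central observation is that the irreducible Brauer characters organise into a short list of polynomial families in $q$, each strictly monotone, so that for fixed $n$ the equation ``degree $= n$'' has at most one solution $q$ per family. Summing the multiplicities over families then yields the bound.

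For the Suzuki case ($q=2^{2m+1}$, $m\ge 1$, $\ell$ odd) I would first recall that the complex character degrees of $\twistB_2(q)$ fall into six families: the trivial character, the Steinberg character of degree $q^2$, two complex-conjugate characters of degree $r(q-1)/2$ with $r=\sqrt{2q}$, and three ``large'' families of degrees $q^2+1$, $(q-1)(q+r+1)$, $(q-1)(q-r+1)$ with multiplicities of order $q$. Burkhardt's decomposition matrices show that every irreducible $\ell$-Brauer character arises from these families and that its multiplicity is bounded above by the corresponding complex multiplicity, so the count over $q$ reduces to the count of complex character degrees equal to $n$. Each family polynomial $f_i(q)$ is strictly increasing in $q$, so $f_i(q)=n$ has at most one Suzuki-admissible solution $q_i$, and the three large families are strictly ordered $(q-1)(q-r+1)<q^2+1<(q-1)(q+r+1)$ at any common $q$, so their solutions $q_1,q_2,q_3$ must lie in distinct Suzuki powers (hence differ by factors of at least $4$). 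Each $q_i$ satisfies $q_i\le\sqrt{n}(1+o(1))$, with multiplicities bounded by $(q_i-2)/2$, $(q_i-r_i)/4$, and $(q_i+r_i)/4$ respectively, plus an $O(1)$ contribution from the small families. Summing gives a bound strictly below $\sqrt{n}$ for all $n>250$.

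The Ree case follows the same template using Ward's character table and the decomposition matrices of Landrock--Michler. Here the non-trivial character degree families split into a few small-degree families of polynomial degree $\le 2$ in $q$ (with bounded multiplicities, contributing $O(1)$ to the count) and the generic large families of polynomial degree $3$ in $q$, whose multiplicities are of order $q$. Monotonicity forces any match $f_i(q)=n$ to obey $q\le n^{1/3}(1+o(1))$, and so the combined multiplicity over all Ree families is bounded by a constant times $n^{1/3}$; verifying that this constant is less than $1$ for $n>250$ gives $\sqrt[3]{n}$.

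The hard part is confirming that there is no extra multiplicative constant --- the bounds $\sqrt{n}$ and $\sqrt[3]{n}$ are essentially tight. This rests on two facts that must be carefully justified: that the $\ell$-modular multiplicities never exceed the complex ones (which is where \cite{BurkhardtSuz} and \cite{LandrockMichler} are essential), and that the three quadratic Suzuki families cannot simultaneously achieve near-maximal multiplicity at the same $n$ because their matching Suzuki powers are forced to be far apart. A small finite check for the smallest relevant field sizes (such as $q=8,32$ for Suzuki and $q=27$ for Ree) should then close the endpoint case and complete the argument.
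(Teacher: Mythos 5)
Your approach is essentially the one the paper takes: use the generic character tables of Suzuki and Ree groups together with the decomposition matrices of Burkhardt and Landrock--Michler, observe that the large degree families are monotone polynomials whose values at distinct admissible field sizes are forced apart (since consecutive Suzuki powers differ by a factor of at least $4$, and Ree powers by at least $9$), deduce $q\lesssim\sqrt{n}$ (resp.\ $q\lesssim n^{1/3}$), and bound the count by the largest multiplicity, which is linear in $q$. Two points need repair, however. First, your claim that every irreducible $\ell$-Brauer character ``arises from these families'' with multiplicity bounded by the complex multiplicity is not literally true: for $\twistB_2(q)$ there is an extra irreducible Brauer character of degree $q^2-1$ when $\ell$ divides $q-\sqrt{2q}+1$, and for $\twistG_2(q)$ there are extra Brauer characters of degrees $q^2-q$, $q^3-1$ and $(q-1)(q+2\sqrt{q/3}+1)(q-\sqrt{3q}+1)$ for certain $\ell$. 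These degrees are not complex character degrees, so the reduction ``to the count of complex character degrees equal to $n$'' fails as stated; the fix is easy (each such character has multiplicity $1$ and its degree lies in the same range as the corresponding large class), but it must be done explicitly.

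Second, your treatment of the possibility that several large families match $n$ simultaneously is weaker than what is actually available, and as written it leaves the endpoint in doubt: if all three quadratic Suzuki families could contribute near-maximal multiplicity, the sum $(q-2)/2+(q-\sqrt{2q})/4+(q+\sqrt{2q})/4\approx q$ with $q\approx\sqrt{n}$ would not be safely below $\sqrt{n}$. The clean observation (used in the paper) is that all large-class degrees of $\twistB_2(q)$ lie strictly between $\tfrac{3}{4}q^2$ and $2q^2$, and since $q_2\ge 4q_1$ these intervals are pairwise disjoint across admissible $q$; combined with the fact that distinct polynomials in the class take distinct values at the same $q$, this shows that \emph{at most one} large family and one $q$ can realise a given $n$. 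Then $n>\tfrac{3}{4}q^2$ gives $q<2\sqrt{n}/\sqrt{3}$ and a single multiplicity of roughly $q/4$, yielding a bound of about $\tfrac{1}{2\sqrt{3}}\sqrt{n}$ plus lower-order terms, with ample room. The analogous disjointness of the intervals $(\tfrac{2}{3}q^3,2q^3)$ handles the Ree case. With these two adjustments your argument closes.
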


\begin{proof} Firstly, we can confirm from the \atlas es that $\twistB_2(8)$ has all irreducible representation degrees below 250. Recall that we exclude $\twistG_2(2)$ and $\twistG_2(3)$ because they are isomorphic to classical groups. (Notice also that $\twistB_2(2)$ is soluble.) Thus we may assume that $q\ge 32$ for type $\twistB_2$ and $q\ge 27$ for $\twistG_2$. With these assumptions, the universal covering groups are isomorphic to the simple groups.

Let us begin with the groups $\twistB_2(q)$. The generic complex character table of $\twistB_2(q)$ was introduced in \cite{SuzukiGroups}, and the decomposition matrices for $\ell$-modular representations of these groups are given in \cite{BurkhardtSuz}. From this information, we see that the set of irreducible Brauer characters is a subset of the irreducible complex characters (restricted to $p$-regular elements), except when $\ell$ divides $q-\sqrt{2q}+1$. In the latter case, there appears an additional irreducible Brauer character with degree $q^2-1$. The possible $\ell$-modular degrees are all listed in Table~\ref{table:characters_2B2}. The multiplicities in the table are for complex characters (except for the degree $q^2-1$), as this gives an upper bound for them.

\begin{table}[hbt]
\[
\begin{array}{ccc}
\toprule
\text{degree} & \text{multiplicity} & \text{class} \\
\otoprule
(q-1)\sqrt{q/2} & 2 & \text{A} \\
\midrule
(q-\sqrt{2q}+1)(q-1) & (q+\sqrt{2q})/4 & \\
q^2-1 & 1 & \\
q^2 & 1 & \text{B} \\
q^2+1 & q/2-1 & \\
(q+\sqrt{2q}+1)(q-1) & (q-\sqrt{2q})/4 & \\
\bottomrule
\end{array}
\]
\caption{Non-trivial character degrees of ${}^2 B_2(q)$}\label{table:characters_2B2}
\end{table}

The character degrees of $\twistB_2(q)$ are given by functions of $q$ that are also polynomials in $\sqrt{q}$. These functions can be divided into two classes according to their polynomial degree, as shown in the last column of Table~\ref{table:characters_2B2}. The values of the functions in class B are increasing and strictly between $\frac{3}{4}q^2$ and $2q^2$. Suppose now that $f_1$ and $f_2$ are two functions from class B, and that $f_1(q_1)=f_2(q_2)$ for some $q_1<q_2$. As $\twistB_2(q)$ is only defined for odd powers of 2, we know that $q_2\ge 4q_1$. It follows that
\begin{equation*}
f_1(q_1)<2q_1^2<\frac{3}{4}\,q_2^2<f_2(q_2),
\end{equation*}
which is against the assumption. This means that if $f_1(q_1)=f_2(q_2)$ holds for two polynomials from class B, we must have $q_1=q_2$. However, it is easily checked that for any $q$, different polynomials in class B give different values. Hence it follows that if $n$ is a character degree of $\twistB_2(q)$, and $n\not=(q-1)\sqrt{q/2}$, then $q$ is fixed and $n=f(q)$ for a unique function in class B.

In the case just described (where $n$ is a value of a function from class B), we know that $n>\frac{3}{4}q^2$, so that $q<2\sqrt{n}/\sqrt{3}$. Substituting this to the largest multiplicity in class B and adding the 2 possible characters coming from class A, we find that
\begin{equation*}
\sum_{q\ge 32}r_n\bigl(\twistB_2(q)\bigr)<\frac{1}{2\sqrt{3}}\sqrt{n}+\frac{1}{2\sqrt[4]{3}}\sqrt[4]{n}+2.
\end{equation*}
This gives the first result.

In case of $\twistG_2$, a similar analysis can be made. The complex character degrees are given in \cite{WardReeGroups}. For $\ell=2$, the decomposition matrices can be found in \cite{LandrockMichler}, and for odd $\ell$, they can be inferred from the Brauer trees presented in \cite{HissHabilitation}. It turns out that in addition to the complex characters, there appears an irreducible Brauer character of degree $q^2-q$ if $\ell=2$, one of degree $q^3-1$ if $\ell$ divides $q^2-\sqrt{3}q+1$, and one of degree $(q-1)(q+2\sqrt{q/3}+1)(q-\sqrt{3q}+1)$ if $\ell=2$ or $\ell$ is odd and divides $q+1$. All these degrees and their multiplicities (or upper bounds to them) are presented in Table~\ref{table:characters_2G2_modular}.

\begin{table}[hbt]
\[
\begin{array}{ccc}
\toprule
\text{degree} & \text{multiplicity} & \text{class} \\
\otoprule
q^2-q & 1 & \text{A} \\
q^2-q+1 & 1 & \\
\midrule
\sqrt{q/3}(q-1)(q-\sqrt{3q}+1)/2 & 2 & \\
\sqrt{q/3}(q-1)(q+\sqrt{3q}+1)/2 & 2 & \text{B} \\
\sqrt{q/3}(q^2-1) & 2 & \\
\midrule
(q^2-1)(q-\sqrt{3q}+1) & (q+\sqrt{3q})/6 & \\
(q-1)(q+2\sqrt{q/3}+1)(q-\sqrt{3q}+1) & 1 & \\
(q-1)(q^2-q+1) & (q-3)/6 & \\
q(q^2-2+1) & 1 & \text{C} \\
q^3-1 & 1 & \\
q^3 & 1 & \\
q^3+1 & (q-3)/2 & \\
(q^2-1)(q+\sqrt{3q}+1) & (q-\sqrt{3q})/6 & \\
\bottomrule
\end{array}
\]
\caption[Non-trivial Brauer character degrees of $\twistG_2(q)$]{Non-trivial Brauer character degrees of $\twistG_2(q)$.}\label{table:characters_2G2_modular}
\end{table}

The degree functions are divided into three classes. We notice that in class C, the values of the functions are strictly between $\frac{2}{3}q^3$ and $2q^3$. Now, suppose that $f_1$ and $f_2$ are two functions from class C, such that $f_1(q_1)=f_2(q_2)$ for some $q_1<q_2$. As $q_2\ge 9q_1$, we get
\[
f_1(q_1)<2q_1^3\le\frac{2}{729}q_2^3<\frac{2}{3}q_2^3<f_2(q_2),
\]
a contradiction. As before, we can conclude that if the value $n$ is a character degree of $\twistG_2(q)$ given by a function in class C, then that function is unique, and so is $q$. Also, in this case we would have $n>\frac{2}{3}q^3$, so that $q<\sqrt[3]{3n}/\sqrt[3]{2}$. Substituting this upper bound into the largest multiplicity and adding the 3 degrees coming from classes A and B (also in these classes only one function can have the value $n$), we get the following estimate:
\[
\sum_{q\ge 27}r_n\bigl(\twistG_2(q)\bigr)
<\frac{\sqrt[3]{3}}{2\sqrt[3]{2}}\sqrt[3]{n}+\frac{3}{2}.
\]
This gives the second result.
%
% In case of $\twistG_2$, a similar analysis can be made, based on the information on complex characters given in \cite{WardReeGroups} and on decomposition matrices found in \cite{HissHabilitation} and \cite{LandrockMichler}. We omit the details.
\end{proof}

\begin{proof}[Proof of Theorem \ref{theorem:exceptional_groups}] Recall that we are not considering the groups $G_2(2)$ or $\twistG_2(3)$ here, as they are isomorphic to classical groups. From the tables of Hiss and Malle (\cite{HissMalle250}), we find that the stated bound holds for all $n\le 250$.

Assume that $n>250$. We first take care of the following exceptional cases: $\twistE_6(2)$, $F_4(2)$, $G_2(3)$, $G_2(4)$, and $\twistF_4(2)$. The universal covering groups $\twistF_4(2)$, $G_2(3)$ and $G_2(4)$ can be found in the \atlas es. Letting $\mathcal{X}$ denote the set of these four groups, we can compute
\begin{equation*}
\max_n\sum_{H\in\mathcal{X}}\frac{r_n(H)}{n}=0.03389.
\end{equation*}

For those groups that are not covered in the \atlas{} of Brauer Characters, $F_4(2)$ has 95 conjugacy classes. On the other hand, the Schur multiplier of the simple group of type $\twistE_6(2)$ is an abelian group of type $2^2\cdot 3$, and F.~Lübeck has computed the character table of the 6-fold subcover (\cite{Luebeck2E6}). From his data, we find that the subcover has 542 conjugacy classes, so $\twistE_6(2)$ has at most 1084. The smallest non-trivial representation degree of $\twistE_6(2)$ is at least 1536, according to~\cite{LandazuriMinimal}.

We use Lemma \ref{lemma:suzuki_ree} to estimate the contribution of types $\twistB_2$ and $\twistG_2$. Computing the values of
\begin{equation*}
F(n)=R_\mathcal{E}(n)+\frac{1}{n}\left(\sum_{H\in\mathcal{X}}r_n(H)+[345]+94+\sqrt{n}+\sqrt[3]{n}\right)
\end{equation*}
up to $n_0=25000$, adding the $[1083]$ only while $n\ge 1536$, we find that the maximum is $F(251)<1.27949$. (We have $R_\mathcal{E}(251)=0.82$ and $r_{251}(H)=0$ for all $H\in\mathcal{X}$.) Afterwards, the function
\begin{equation*}
\bar{F}(n)=\bar{R}_\mathcal{E}(n)+0.03389+\frac{1083+94}{n}+\frac{1}{\sqrt{n}}+\frac{1}{n^{2/3}}
\end{equation*}
is decreasing, $F(n)<\bar{F}(n)$, and $\bar{F}(n_0)<F(251)$.

As $F(n)$ is an upper bound to $Q_\mathcal{E}(n)$ for $n>250$, we see that the bound stated in the theorem holds also for $n>250$. This proves the claim.
\end{proof}

\section{Main result}\label{section:main_theorem}

The Main Theorem \ref{theorem:main_theorem} is proved along the same lines as Theorem 6.3 in~\cite{GuralnickLarsenTiep}. Let $G$ be an almost simple group having as its socle $G_0=G_0(V,\form)$, a classical simple group preserving the form $\form$ on the $n$-dimensional $\F_q$-space $V$. (The form is either trivial, or a non-degenerate alternating, quadratic or hermitian form, and the action is projective). With this notation, we have $G_0\unlhd G\le\aut(G_0)$, and apart from certain known exceptions, the automorphism group is $\PGamma(V,\form)$, the group of projective semilinear transformations preserving $\form$ up to scalar multiplication.
The exceptions occur when $G_0$ is one of $\PSL_n(q)$, $\Sp_4(2^k)$ or $\POmega_8^+(q)$, with additional (graph) automorphisms of order 2, 2 and 3, respectively. (See \cite[Th.~2.1.4]{BlueBook}.)

Let $M$ be a maximal subgroup of $G$ not containing $G_0$. It is known that the intersection $M\cap G_0$ is non-trivial (see e.g.\ \cite[p.~395]{LPSONanScott}), so we must have $M=N_G(M\cap G_0)$. It then follows that the intersection of $M\cap G_0$ completely determines $M$, and moreover, two maximal subgroups $M_1$ and $M_2$ are conjugate in $G$ precisely when the intersections $M\cap G_0$ and $M\cap G_1$ are. Thus, to find an upper bound for the number of $G$-conjugacy classes of maximal subgroups $M$ not containing $G_0$, it suffices to bound the number of $G_0$-classes of the intersections $M\cap G_0$.

Let $\Delta$ stand for the group of similarities of $V$ preserving $\form$, and let $\PDelta$ be the corresponding projective group. It is often the number of $\PDelta$-classes of $G_0\cap M$ which is the easiest to bound. When this is the case, we need to insert a factor of $\sigma=[\PDelta:G_0]$ to account for the splitting of $\PDelta$-classes under $G_0$. In the linear and unitary groups, $\sigma$ is at most the dimension of $V$, otherwise it is at most 8 (see Table~2.1.D in \cite{BlueBook}).

Assume now that $\aut(G_0)=\PGamma(V,\form)$ (the general case). To estimate the number of $G_0$-classes of maximal subgroups, we make use Aschbacher's Theorem on maximal subgroups of classical groups (\cite{Aschbacher}, see also \cite{BlueBook}). By that theorem, $M$ either belongs to one of the so-called \emph{geometrical families} $\geomclass_1$--$\geomclass_8$ of maximal subgroups, or in a further family consisting of almost simple groups. This additional family will be called $\sclass$.

Let us look at $\sclass$ more closely. By Aschbacher's Theorem, any maximal subgroup in this family is itself an almost simple group whose non-abelian simple socle acts absolutely irreducibly on $V$. Let $M$ be one of these maximal subgroups and let $M_0$ denote its socle. Then we have $M_0\le M\cap G_0\lhd M$ and $N_G(M_0)=M$. As before, it follows that it is enough to consider the $G_0$-conjugacy classes of the socles.

The socle $M_0$, via its action on $V$, corresponds to an absolutely irreducible modular projective representation of a simple group isomorphic to it. The projective representation of the simple group in turn lifts to a linear representation of the universal covering group $\tilde{M_0}$, and equivalent linear representations correspond to subgroups conjugate under $\GL(V)$. Furthermore, the conjugating element will always reside in $\Delta$ (see e.g.\ \cite[Corollary~2.10.4]{BlueBook}).

The number of cross-characteristic representations of groups of Lie type was bounded in Corollary~\ref{corollary:final_constant}. Assume now that the characteristic $\ell$ of the representation divides $q$. In this case, we will apply Steinberg's Tensor Product Theorem, which states that an irreducible representation of a simply-connected quasisimple group of Lie type is a tensor product of Frobenius twists of so-called restricted representations (\cite[Theorem~5.4.5]{BlueBook}). For the exceptional covering groups, which are not of the simply-connected type, we note that the $\ell$-part of the Schur multiplier of $M_0$ belongs to the kernel of the representation. From a list of Schur multipliers of groups of Lie type, we find that dividing out the said $\ell$-part, the remaining part is just the regular Schur multiplier. It follows that any representation of an exceptional covering group is a lift of a representation of the simply-connected group.

Imitating \cite{GuralnickLarsenTiep}, we now divide the non-geometrical subgroups into five subfamilies according to the nature of the socle $M_0$
and the representation:
\begin{itemize}
\item[$\sclass_1$] The socle is an alternating group.
\item[$\sclass_2$] The socle is a sporadic group.
\item[$\sclass_3$] The socle is a group of Lie type with defining characteristic not dividing~$q$.
\item[$\sclass_4$] The socle is a group of Lie type with defining characteristic dividing $q$, and the representation is not restricted.
\item[$\sclass_5$] The socle is a group of Lie type with defining characteristic dividing $q$, and the representation is restricted.
\end{itemize}
For each subfamily $\sclass_i$, we want to bound the number of inequivalent irreducible projective representations of simple groups in that subfamily from above, as this gives a bound to the number of $\PDelta$-conjugacy classes of maximal subgroups appearing in $\sclass_i$. As already mentioned, these projective representations of the simple groups can also be viewed as linear representations of their covering groups.

Next, we present some lemmata that will be useful for the coming calculations. The first is a result of F.~Lübeck.

\begin{lemma}[\cite{LuebeckSameChar}, Theorems 4.4 and 5.1]\label{lemma:lubeck_same_char}
Let $H$ be the universal covering group of a simple classical group, and let $\rho$ be a restricted, absolutely irreducible representation of $H$ in the defining characteristic. If $H$ is of type $A_r$ or $\twistA_r$ for some $r$, then either $\deg(\rho)>r^3/8$, or otherwise $\rho$ is the natural representation or its dual, or one of at most 3 dual pairs of representations. If $H$ is of another classical type of rank $r$, then one of the following holds:
\begin{enumerate}
\item[(a)] If $r>11$, then either $\dim(\rho)>r^3$, or otherwise $\rho$ is the natural representation or one of at most 2 representations.
\item[(b)] If $r\le 11$, the either $\dim(\rho)>r^3$, or otherwise $\rho$ is the natural representation or $\dim(\rho)$ is one of at most 5 possibilities.
\end{enumerate}
\end{lemma}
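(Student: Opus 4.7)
The plan is to apply the theory of restricted highest weights. By Steinberg's classification, the restricted absolutely irreducible representations of $H$ in defining characteristic $p$ correspond bijectively to dominant weights $\lambda=\sum_i a_i\lambda_i$ with $0\le a_i<p$, where the $\lambda_i$ are the fundamental weights of the root system of $H$. The task is then to classify those $\lambda$ for which $\dim L(\lambda)$ is small relative to the rank $r$, and to match the few exceptions against familiar small modules.

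First I would treat the case of type $A_r$ or $\twistA_r$. The natural module corresponds to $\lambda=\lambda_1$ and its dual to $\lambda=\lambda_r$. For any other restricted weight, I would bound $\dim L(\lambda)$ from below via the Weyl dimension formula for the Weyl module $V(\lambda)$, combined with Premet's theorem and, where needed, Jantzen's sum formula, which give workable modular lower bounds even when the Weyl module is reducible. A support argument then shows that two distinct nonzero coefficients $a_i$, or a single $a_i\ge 2$ not sitting at the extreme end of the Dynkin diagram, force $\dim L(\lambda)>r^3/8$ once $r$ is even moderately large. This leaves a bounded list of candidates, essentially $\mathrm{Sym}^2$ and $\wedge^2$ of the natural module together with their duals and a few Frobenius twists, to inspect by hand and match against the permitted ``at most $3$ dual pairs''.

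For the remaining classical types $B_r$, $C_r$, $D_r$ (and $\twistD_r$) I would run an analogous analysis: the small restricted representations cluster around the natural module (from $\lambda_1$), the spin or half-spin modules (from $\lambda_r$, and $\lambda_{r-1}$ for type $D$), and a short list arising from $\lambda_2$, such as the adjoint and its analogues. General polynomial lower bounds on $\dim L(\lambda)$ of degree at least $4$ in $r$ eliminate almost every other weight. For $r>11$ the spin representations already have dimension greater than $r^3$ and can be absorbed into the ``at most $2$'' exceptions of part~(a); for $r\le 11$ the spin modules remain small and must be kept, accounting for the extra slots in part~(b).

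The main obstacle is twofold. In small defining characteristic, the Weyl module can be badly reducible and $\dim L(\lambda)$ may fall well below $\dim V(\lambda)$, so obtaining lower bounds sharp enough to force $\dim L(\lambda)>r^3$ requires either Premet's bounds on characters, Jantzen's sum formula, or explicit decomposition information. In parallel, the low-rank enumeration ($r\le 11$) must cope with sporadic small modules that have no uniform description and hence demand a finite but extensive case analysis. Both difficulties are handled in practice by L\"ubeck's MAGMA computations reported in \cite{LuebeckSameChar}, which is why the statement is cited as a lemma here rather than re-derived.
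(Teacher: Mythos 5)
The paper gives no proof of this lemma: it is imported verbatim from L\"ubeck's Theorems 4.4 and 5.1, exactly as you observe in your closing paragraph, so there is no internal argument to compare yours against. Your sketch (restricted highest weights via Steinberg, Premet/Jantzen-type lower bounds on $\dim L(\lambda)$, the spin modules explaining the $r\le 11$ threshold, and explicit low-rank computation) is a fair account of how the cited reference actually establishes the result.
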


The second lemma gives some information on modular representations of alternating groups and their covers.

\begin{lemma}\label{lemma:num_alts} Let $\altcover(d)$ denote the universal covering group of a simple alternating group $\alt(d)$. The number of $d$, such that there exists an irreducible modular representation of $\altcover(d)$ of dimension $n$, is less than $2\sqrt{n}+4.33\ln(4n)+14$.
\end{lemma}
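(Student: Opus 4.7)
The plan is to split the set of admissible $d$ into a small ``base'' range handled by direct enumeration, together with two large-$d$ ranges distinguished by whether the irreducible representation of $\altcover(d)$ descends to $\alt(d)$ or is faithful on a non-trivial central cover. For $d \le 4$, $\alt(d)$ is solvable and contributes nothing; for $5 \le d \le d_0$ with $d_0$ a modest universal constant, a naive count yields at most $d_0-4$ values of $d$, which will ultimately be absorbed into the additive constant $14$. The exceptional covers of $\alt(6)$ and $\alt(7)$, whose Schur multiplier has order $6$, can also be folded into this base range.

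For $d > d_0$ and a representation of $\altcover(d)$ that descends to $\alt(d)$, I would appeal to the James--Jantzen--Seitz lower bounds on the smallest cross-characteristic irreducible representations of alternating groups. Apart from the trivial and sign representations and the two ``natural'' representations of dimension $d-1$ or $d-2$, every irreducible representation of $\alt(d)$ has dimension at least a quadratic polynomial in $d$, say of the form $\tfrac{(d-1)(d-4)}{2}$, uniformly in the characteristic $\ell$ for $d$ large enough. Therefore either $n \in \{d-1, d-2\}$, which contributes at most two values of $d$, or the quadratic gap forces $d \le 2\sqrt{n} + O(1)$, contributing at most $2\sqrt{n} + O(1)$ further values of $d$.

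For $d > d_0$ and a representation faithful on the double cover $2.\alt(d)$, I would use the Wagner--Kleshchev--Tiep lower bound on the dimension of such a faithful representation, which in odd characteristic is at least $2^{\lfloor(d-s)/2\rfloor}$ for a small shift $s$ depending on $d \bmod 4$; in characteristic $2$ the group $2.\alt(d)$ has no faithful representations at all. Summing over the few closely spaced series of spin representations, whose dimensions differ from the basic-spin dimension by simple multiplicative factors, yields at most $3\log_2(4n) = 4.33\ln(4n)$ admissible values of $d$. Combining the three contributions, $2 + 2\sqrt{n} + O(1) + 4.33\ln(4n) + (d_0-4)$, and absorbing the $O(1)$ terms into the final additive constant $14$, gives the claimed inequality.

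The main obstacle will be verifying, uniformly in all cross-characteristics $\ell$, the quadratic ``gap'' between the natural and non-natural irreducible representations of $\alt(d)$: the cleanest Jantzen--Seitz statement is for defining characteristic, so handling small primes $\ell$ may require modular refinements such as the Brundan--Kleshchev branching rules or a direct analysis via Mullineux's involution. A secondary but essential difficulty is the careful bookkeeping of small-$d$ exceptions, and in particular of the covers of $\alt(6)$ and $\alt(7)$, so that the cumulative additive constant remains below $14$ after all three contributions have been summed.
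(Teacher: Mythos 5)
Your proposal follows essentially the same route as the paper: a quadratic lower bound on the non-natural irreducible representations of $\alt(d)$ produces the $2\sqrt{n}$ term, an exponential (Wagner-type) lower bound on faithful representations of the covering group produces the $4.33\ln(4n)$ term, and the finitely many exceptions are absorbed into the additive constant. The one substantive soft spot is the quadratic threshold itself, which you rightly flag as the main obstacle. The input the paper uses is James's theorem (\cite{JamesSymmetricModular}): for $d\ge 15$, only three pairs of conjugate $\ell$-regular partitions give non-trivial irreducible $\ell$-modular $\symm(d)$-modules of dimension below $f(d)=\tfrac{1}{2}(d-1)(d-2)$, uniformly in $\ell$. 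But the paper then takes the threshold for $\alt(d)$ to be $f(d)/2=\tfrac{1}{4}(d-1)(d-2)$, not a bound of size $\tfrac{1}{2}d^2$: a $\symm(d)$-module of dimension just above James's bound could a priori split into two halves on restriction to $\alt(d)$, so your claimed uniform lower bound $\tfrac{1}{2}(d-1)(d-4)$ for all non-natural irreducibles of $\alt(d)$ is not justified as stated. This costs nothing, since the half-sized threshold still gives $d\le 2\sqrt{n}+O(1)$, which is exactly where the coefficient $2$ (rather than $\sqrt{2}$) in the lemma comes from; it also means the exceptional dimensions number up to $12$ (three pairs, two dimension polynomials each, possible splitting), not just the two natural ones. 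Finally, on bookkeeping: rather than adding $d_0-4$ base-range values of $d$ to the constant, the paper disposes of $n\le 250$ via the Hiss--Malle tables, after which every $d<15$ automatically satisfies $f(d)/2\le n$ and is already counted inside the $2\sqrt{n}$ term; this is what keeps the additive constant at $12+O(1)<14$, whereas your accounting ($2$ natural values plus $d_0-4\approx 11$ plus the $O(1)$ from inverting the quadratic) is in danger of exceeding $14$.
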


\begin{proof} For dimensions $n\le 250$, Hiss and Malle have listed all the representations of finite quasisimple groups in \cite{HissMalle250}, so the claim can be readily checked in this case.

Assume that $n>250$. When an irreducible representation of the symmetric group $\symm(d)$ is restricted to $\alt(d)$, it may stay irreducible or split into two irreducible representations of half the dimension. Define $f(d)=\frac{1}{2}(d-1)(d-2)$. By \cite[Theorem~7]{JamesSymmetricModular}, there are only three pairs of conjugate $\ell$-regular partitions that correspond to non-trivial irreducible $\ell$-modular representations of $\symm(d)$ of dimension less than $f(d)$, provided that $d\ge 15$. Depending on $\ell$, there are two possibilities for the dimension of each of these representations (see~\cite[Appendix]{JamesSymmetricModular}). The dimensions are given by strictly increasing polynomials in $d$. Taking into account the possibility that some of these exceptional representations may split in $\alt(d)$, we can say that there are at most 12 possible values for the dimension of an irreducible representation of $\alt(d)$ below $f(d)/2$.

Now, assuming that $n$ is a dimension of an irreducible representation of $\alt(d)$, at least one of the following must hold:
\begin{enumerate}
\item[(1)] $n\ge f(d)/2$
\item[(2)] $d\ge 15$ and the value of $n$ is one of 12 possibilities
\item[(3)] $d<15$.
\end{enumerate}
On the other hand, having $d<15$ without $n\ge f(d)/2$ contradicts our assumption that $n>250$. Hence either (1) or (2) holds, so the number of possible values of $d$, for which there exists an $n$-dimensional irreducible modular representation of $\alt(d)$, is at most 12 plus the number of $d$ such that $f(d)/2\le n$. As $f$ is strictly increasing, the sum is at most $f^{-1}(2n)+12<2\sqrt{n}+14$.

Let us then look at faithful representations of $\altcover(d)$. We may assume that $d>9$, since otherwise all irreducible representations of $\altcover(d)$ have dimension less than 250. From \cite{WagnerProjAlt}, we find that the smallest dimension of a faithful irreducible representation of $\altcover(d)$ is at least $g(d)=2^{\lfloor(d-s-1)/2\rfloor}$, where $s$ is the number of 1's in the binary representation of $d$. Making an estimate $g(d)>0.25\cdot 1.26^d$, we find that the number of $d$, such that there exists a faithful $n$-dimensional irreducible representation of $\altcover(d)$, is less than $4.33\ln(4n)$. The claim follows.
\end{proof}

The final lemma deals with the exceptional automorphisms.

\begin{lemma}\label{lemma:exceptional_automorphisms} Let $G$ and $G_0$ be as above.
\begin{itemize}
\item[(a)] Suppose $G_0=\Sp_4(2^k)$ and $G\not\le\PGamma(V,\form)$. The number of $G$-conjugacy classes of maximal subgroups $M$ of $G$, such that $G_0\not\le M$ and $M$ does not appear in Aschbacher's classes $\geomclass_5$ or $\sclass$, is at most $5$.
\item[(b)] Suppose $G_0=\POmega_8^+(q)$. The number of $G$-conjugacy classes of maximal subgroups $M$ of $G$, such that $G_0\not\le M$ and $M$ does not appear in class $\geomclass_5$, is at most $44$.
\end{itemize}
\end{lemma}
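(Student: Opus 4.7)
The plan is to enumerate, for each of the two exceptional almost simple situations, those $G$-conjugacy classes of maximal subgroups that can arise outside the excluded Aschbacher classes, using published classifications that treat the exceptional graph or triality automorphism explicitly. In both parts the bound is an absolute constant, so what is needed is a finite case check, not an asymptotic argument.

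For part~(a), the outer automorphism group of $\Sp_4(2^k)$ is cyclic of order $2k$, generated by the product of the Frobenius $\phi$ with the exceptional graph automorphism $\gamma$ that swaps root lengths; the hypothesis $G\not\le\PGamma(V,\form)$ is precisely the statement that $\gamma$ (up to a power of $\phi$) is visible in $G/G_0$. I would appeal to the tables of Bray, Holt and Roney-Dougal for the maximal subgroups of the extensions of $\Sp_4(q)$, equivalently to the refinement of Aschbacher's theorem in~\cite{BlueBook}, and inspect the non-$\geomclass_5$, non-$\sclass$ geometric maximals family by family: classes $\geomclass_4$ and $\geomclass_7$ are empty in dimension four, class $\geomclass_6$ is vacuous in even characteristic, and the remaining families $\geomclass_1,\geomclass_2,\geomclass_3,\geomclass_8$ each contribute at most a couple of $\PDelta$-classes, several of which are fused by $\gamma$ (e.g.\ the two parabolic classes, and the $O_4^+$ and $O_4^-$ stabilisers). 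A direct count then gives at most $5$ $G$-classes, as claimed.

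For part~(b), the outer automorphism group of $\POmega_8^+(q)$ properly contains the image of $\PGamma(V,\form)$, the quotient being generated by the order-$3$ triality automorphism; together with the graph involution already present in $\PGamma$, the full graph subgroup of the outer group is a copy of $S_3$. I would appeal to Kleidman's explicit classification of the maximal subgroups of $\POmega_8^+(q)$ and its full automorphism group (summarised in~\cite{BlueBook}, Section~$3.5$), which lists the $G$-conjugacy classes of maximals in each geometric family $\geomclass_i$ together with the ``novelty'' maximals that become maximal only in extensions containing a triality element. Summing the contributions from $\geomclass_1,\geomclass_2,\geomclass_3,\geomclass_4,\geomclass_6,\geomclass_7,\geomclass_8$ and the novelties produces at most $44$ classes.

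The main obstacle in both parts is the careful bookkeeping of how the exceptional graph or triality automorphism acts on the $\PDelta$-conjugacy classes of each Aschbacher family: several $\PDelta$-classes fuse under the exceptional automorphism, some maximals of $\PGamma$ cease to be maximal in the larger $G$, and conversely novelty subgroups appear that are maximal only in $G$. This is routine but tedious, which is why the practical route is to quote the explicit case-by-case enumeration available in the cited references rather than re-derive it.
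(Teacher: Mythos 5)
Your overall strategy --- reduce both parts to a finite case check against published classifications of maximal subgroups in the presence of the exceptional graph or triality automorphism --- is sound, and for part (b) it coincides exactly with the paper, which simply counts classes from Kleidman's list of maximal subgroups of $\POmega_8^+(q)$ and its automorphic extensions. For part (a) you take a genuinely different route: you propose to walk through the original Aschbacher families $\geomclass_1$--$\geomclass_8$ and track fusion under the graph automorphism $\gamma$ (or to quote Bray--Holt--Roney-Dougal), whereas the paper invokes Section~14 of Aschbacher's original article, where a replacement family system $\geomclass_1'$--$\geomclass_5'$ is constructed specifically for $\Sp_4(2^k)$ extended by $\gamma$, with $\geomclass_4'$ equal to the old subfield family $\geomclass_5$; the count $1+2+1+1=5$ then falls out of Aschbacher's own class counts together with the observation that these classes do not split under $G$. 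The paper's route is cleaner because Aschbacher's theorem in its standard form does not apply once $G\not\le\PGamma(V,\form)$, so ``inspecting the old geometric maximals family by family'' is only meaningful after one has the extended classification in hand --- the subgroups that are actually maximal in $G$ are novelties such as the Borel normaliser arising from the two swapped parabolic classes, not fused unions of old maximals. Two details in your sketch are off, though neither is fatal since you ultimately defer to the tables: $\gamma$ does not fuse the $O_4^+$ and $O_4^-$ stabilisers with each other, but rather sends $O_4^+(q)$ to the $\geomclass_2$ imprimitive subgroup $\Sp_2(q)\wr 2$ and $O_4^-(q)$ to the $\geomclass_3$ field-extension subgroup $\Sp_2(q^2).2$; and the two parabolic classes do not merge into one class but instead give rise to a single novelty class stabilising an incident point--line pair. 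With those corrections the direct count does land on the same five classes.
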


\begin{proof}
(a) In \cite[Section 14]{Aschbacher}, a new system of families $\geomclass_1'$--$\geomclass_5'$ is given, which contains all maximal subgroups not included in $\sclass$. Moreover, $\geomclass_4'$ is the same family as the old family $\geomclass_5$. It is then proved that there is only one $\aut(G_0)$-con\-ju\-ga\-cy class in each of $\geomclass'_1$, $\geomclass'_3$ and $\geomclass'_5$. In $\geomclass'_2$, there are at most two $\aut(G_0)$-classes. In fact, the proof also shows that the classes do not split under $G$. This gives the result.

(b) In this case, the number of conjugacy classes can be counted from \cite[Section 1.5]{KleidmanOmega8}, where P.~Kleidman lists all conjugacy classes of maximal subgroups of $G$.
\end{proof}

Now we are in a position to prove the main result of this paper. We retain the notation presented in the beginning of this section.

\begin{proof}[Proof of Theorem \ref{theorem:main_theorem}]
We assume that $G_0$ is not a group mentioned in parts (a) and (b) of Lemma \ref{lemma:exceptional_automorphisms}. These cases are dealt with at the end of the proof. The only other groups $G$ containing exceptional automorphisms are the automorphism groups of $G_0=\PSL_n(q)$. Tor these groups, Aschbacher presents another family, $\geomclass_1'$, that contains the additional maximal subgroups. We will follow the example of \cite{BlueBook} and include the new family $\geomclass_1'$ in $\geomclass_1$ in this case.

In accordance with the discussion above, we want to bound the number of $G_0$-conjugacy classes of groups $M\cap G_0$, where $M$ is a maximal subgroup of $G$. We will examine each Aschbacher family separately, bounding the number of conjugacy classes with $M$ belonging to that family. Afterwards, all the bounds will be added together. Let us first look at the geometrical families.

\textbf{Subfield family.} Family $\geomclass_5$ contains groups defined over a subfield of $\F_q$ of prime index. By \cite[Lemma 2.1]{GKS}, the number of $\PDelta$-conjugacy classes of maximal subgroups (intersected with $G_0$) is at most $\log_2\log_2 q+1$. We need to add a factor of $\sigma$ to account for the splitting, so that the total number of the $G_0$-conjugacy classes of maximal subgroups of the subfield family becomes at most $\sigma(\log_2\log_2 q+1)$.

\textbf{Other geometrical families.} We refer again to \cite[Lemma 2.1]{GKS}. There, an upper bound to the number of $\Delta$-classes of maximal subgroups of non-subfield type is given as
\begin{equation*}
\frac{3n}{2}+4d(n)+\pi(n)+3\log_2 n+8,
\end{equation*}
where $d(n)$ is the number of divisors of $n$, and $\pi(n)$ the number of prime divisors. (This also includes the family $\geomclass_1'$.) Using the estimate $\pi(n)\le d(n)\le\log_2 n$ and multiplying by $\sigma$ to account for splitting under $G_0$, we find that the number of $G_0$-classes of maximal subgroups is at most $\sigma(\frac{3}{2}n+8\log_2 n+8)$ in this case.

\medskip

Let us then turn to the exceptional family $\sclass$. We will deal with each subfamily $\sclass_i$ separately.

\textbf{Case $\sclass_1$.} By Lemma \ref{lemma:num_alts} above, the number of non-isomorphic $\alt(d)$ that can yield an $n$-dimensional irreducible projective representation is at most $2\sqrt{n}+4.33\ln(4n)+14$.
On the other hand, it was shown in Theorem 1.1.(ii) of \cite{GuralnickLarsenTiep} that the number $R_n(\alt(d))$ of $n$-dimensional irreducible projective representations of $\alt(d)$ is less than $n^{2.5}$. Hence,
\begin{equation*}
\sum_d R_n(\alt(d))<2n^3+4.33n^{2.5}\ln(4n)+14n^{2.5}.
\end{equation*}
As explained above, the number of inequivalent irreducible projective representations can be used as an upper bound to $\PDelta$-conjugacy classes of maximal subgroups in the family $\sclass$. We still need to take into account the splitting of $\PDelta$-classes under $G_0$ by inserting a factor of $\sigma$. Hence, the number of conjugacy classes of maximal subgroups of this type is bounded by $\sigma(2n^3+4.33n^{2.5}\ln(4n)+14n^{2.5})$.

\textbf{Case $\mathcal{S}_2$.} We divide the sporadic groups into two sets. The first one contains those for which complete information on representation degrees is available in the \atlas es. We call these the \emph{small} sporadic groups. For small sporadic groups, we list the maximal multiplicity of any representation degree of the universal covering group in Table~\ref{table:sporadic_multiplicities}. Adding these up will give an upper bound for the total number of $n$-dimensional representations of small sporadic groups, for any fixed $n$.

\begin{table}[hbt]
\[
\begin{array}{lcccccccccc}
\toprule
\text{group} & M_{11} & M_{12} & M_{22} & M_{23} & M_{24} & J_1 & J_2 & J_3 & HS & McL \\
\midrule
\text{multiplicity} & 3 & 3 & 8 & 3 & 3 & 4 & 2 & 6 & 3 & 6 \\
\bottomrule
\end{array}
\]
\caption{Maximal multiplicities of representation degrees (over all characteristics)
for the universal covering groups of small sporadic groups}\label{table:sporadic_multiplicities}
\end{table}

The other set contains the \emph{big} sporadic groups. For these, we shall make a crude approximation based solely on conjugacy class numbers of the universal covering groups. These are listed in Table~\ref{table:sporadic_class_numbers}. Adding them up will give a bound for the total number of representations.

The smallest representation degree of a big sporadic group is at least 12 (the group is the Suzuki group, see \cite[Prop.\ 5.3.8]{BlueBook}). In conclusion, we can bound the number of $\PDelta$-classes in subfamily $\sclass_2$ by 41, when $n\le 12$, and by 1988 afterwards. This needs to be multiplied by $\sigma$ to account for splitting.

\begin{table}[hbt]
\begin{align*}
& \begin{array}{lcccccccc}
\toprule
\text{group} & J_4 & He & Ru & Suz & O\text{'}N & Co_1 & Co_2 & Co_3 \\
\midrule
\text{class number} & 62 & 33 & 61 & 210 & 80 & 167 & 60 & 42 \\
\bottomrule
\end{array} \\
& \begin{array}{lcccccccc}
\toprule
\text{group} & Fi_{22} & Fi_{23} & Fi_{24}' & HN & Ly & Th & BM & M \\
\midrule
\text{class number} & 282 & 98 & 256 & 54 & 53 & 48 & 247 & 194 \\
\bottomrule
\end{array}
\end{align*}
\caption{Class numbers of the universal covering groups of big sporadic groups}\label{table:sporadic_class_numbers}
\end{table}

\textbf{Case $\mathcal{S}_3$.} This subfamily corresponds to cross-characteristic representations of finite quasisimple groups of Lie type. By Corollary~\ref{corollary:final_constant}, the total number of equivalence classes of such representations is less than $15n$. As before, we need to add a factor of $\sigma$ to count for the splitting of the corresponding $\PDelta$-classes.

\textbf{Case $\mathcal{S}_4$.} By Steinberg's Tensor Product Theorem, the representation is a non-trivial tensor product of twisted restricted representations. Denote $G_0=\mathrm{P}Cl_{y^s}(q)$, a classical group of dimension $y^s$ over the field $\F_q$. Now, from Corollary~6 of \cite{SeitzNonRestricted} it follows that $M$ normalises a classical subgroup of $G_0$ of the form $\mathrm{P}Cl_y(q^s)$. These subgroups are completely described in \cite{Schaffer}, and we can read from Table~1B of that work that the number of subgroups of this type is at most $a+2$, where $a$ is the number of ways to write the dimension $n$ as a power $y^s$. Thus, the number of subgroups of the form $\mathrm{P}Cl_y(q^s)$ is bounded above by $\log_2\log_2 n+2$. This number also bounds the number of conjugacy classes of $M$.

\textbf{Case $\mathcal{S}_5$.} In this case, the socle of $M$ is the image in $G_0$ of a projective representation of a finite simple group $H=H_{r'}(q')$ of Lie type. The homomorphism is injective, so we can identify $H$ with the socle. The representation is characterised by a highest weight $\lambda$, as explained e.g.\ in \cite[§5.4]{BlueBook}. We write $V(\lambda)$ for the module of the representation.

Assume first that $G_0=\PSL_n(q)$. We know that $H$ does not preserve a non-degenerate alternating, Hermitian or quadratic form, for otherwise $M$ would belong to Aschbacher's family $\geomclass_8$ which consists of classical subgroups of $G$.

If $H=\PSL_{r'+1}(q')$, we must have $q=q'$. Namely, if it were that $q'>q$, then Proposition~5.4.6 of \cite{BlueBook} would tell us that the representation is not restricted. On the other hand, we cannot have $q'<q$, because $M$ is not in the subfield family $\geomclass_5$.

By Lemma \ref{lemma:lubeck_same_char}, either $(r')^3/8<n$, or the representation belongs to one of at most 3 dual pairs of representations. In the former case, $r'$ can be one of at most $2n^{1/3}$ possibilities, and by \cite[Theorem~1.1.(i)]{GuralnickLarsenTiep}, $\PSL_{r'+1}(q)$ has at most $n^{3.8}$ restricted representations of degree $n$. This gives altogether at most
\[
2n^{1/3+3.8}+3
\]
conjugacy classes of subgroups under $\PDelta$.

If $H=\PSU_{r'+1}(q')$, there is no corresponding subgroup of $G$. Namely, it follows from Theorems 5.4.2 and 5.4.3 presented in \cite{BlueBook}, that the dual module $V(\lambda)^*$ is isomorphic to either $V(\lambda)$ or  $V(\lambda)^\psi$, where $\psi$ is the involutory automorphism of $\F_q=\F_{q'^2}$.
In the first case the group would fix a non-degenerate bilinear form, and in the latter case it would fix a Hermitian form (see \cite[Lemma 2.10.15]{BlueBook}). Similarly, $H$ cannot be of type $B$ or $C$ as groups of these types have only self-dual representations.

If $H$ is of one of the two remaining orthogonal types, there are at most 2 possibilities for $q'$ (either $q'=q$ or possibly $q'=q^{1/2}$). In this case, Lemma~\ref{lemma:lubeck_same_char} tells us that either $(r')^8<n$, or the representation is one of at most 5 different possibilities. In the former case, there are at most $n^{1/3}$ possibilities for $r'$, and by \cite[Theorem~1.1.(i)]{GuralnickLarsenTiep}, the number of restricted $n$-dimensional representations of $H_{r'}(q')$ is at most $n^{2.5}$. Thus the number of $\PDelta$-conjugacy classes of maximal subgroups of types $D$ and $\twistD$ is at most $2(2n^{1/3+2.5}+2)$.

For the exceptional types, only $E_6$ and $\twistE_6$ have other than self-dual representations (\cite[Proposition~5.4.3]{BlueBook}). There are at most two possibilities for $q'$ with the twisted type, and the number of restricted $n$-dimensional representations of each group is at most $n^{2.5}$ by \cite[Theorem~1.1.(i)]{GuralnickLarsenTiep}. Hence, the number of $\PDelta$-conjugacy classes is at most $3n^{2.5}$.

As a conclusion, the number of $\PDelta$-conjugacy classes has been bounded in the case $G_0=\PSL_n(q)$. To bound the number of $G_0$-classes, we multiply the original bound by $\sigma$, which in this case is at most $n$. Hence, we get the following bound for the number of $G_0$-classes:
\begin{equation*}
b_1(n)=2n^{5.14}+4n^{3.84}+3n^{3.5}+13n.
\end{equation*}

Similar analysis can be performed when $G_0$ is of any other classical type. In the case $G_0=\PSU_n(q)$, the main difference is that it is not possible to have $H=\PSL_{r'+1}(q')$. This is because we would need the dual module $V(\lambda)^*$ to be isomorphic to $V(\lambda)^{\psi}$, where $\psi$ is the involutory automorphism of $\F_{q^2}$, but this would lead to an impossible equation on the weights, since $\lambda$ is restricted. Instead, we may well have $H=\PSU_{r'+1}(q')$, and the estimates lead to the same bound $b_1(n)$ as in the previous case.

On the other hand, when $G_0$ is neither linear nor unitary, we have $\sigma\le 8$, and we can make the estimate simpler by assuming that $H$ can be of any Lie type and saying that there are always at most two possibilities for the value of~$q'$. The full details are omitted, but the intermediate results are gathered in the following table:
\[
\begin{array}{ccc}
\toprule
H & \text{no.\ of Lie types} & \text{bound} \\
\otoprule
\PSL_{r'}(q') & 1 & 4n^{1/3+3.8}+3 \\
\PSU_{r'}(q') & 1 & 4n^{1/3+3.8}+3 \\
\text{other classical} & 4 & 2n^{1/3+2.5}+5 \\
\text{exceptional} & 10 & 2n^{2.5} \\
\bottomrule
\end{array}
\]
Multiplied by 8, the bound becomes
\begin{equation*}
b_2(n)=64n^{4.14}+64n^{2.84}+160n^{2.5}+208.
\end{equation*}

For $n\ge 32$, we have $b_1(n)>b_2(n)$. When $n<32$, we can use Lemma~\ref{lemma:lubeck_same_char} together with additional data given in \cite{LuebeckSameChar} to bound the number of representations in the same way as above. Firstly, if $H_{r'}(q')$ is of type $A$ or $\twistA$, we must have $r'\le 6$, and we can check from the tables in \cite[Appendix~A]{LuebeckSameChar} that there are at most 5 representations of any particular dimension for both types (counting dual representations only once). For~$\twistA$, there are two possibilities for $q$, so that we get at most 15 representations of the same degree.

Similarly, for the other classical types, we have $r'$ at most 3, so the types $D$ can $\twistD$ can be left out. We get at most 4 representations of the same degree. For the exceptional types, we get at most 8 representations of the same degree, of which 4 come from types $E_6$ and $\twistE_6$ and 4 from $G_2$ and $\twistG_2$. Saying, for simplicity, that $q'$ can have at most 2 values for each exceptional type, we get 16 representations. Thus, there are altogether at most 35 restricted representations of any degree $n$ less than 32, so there are at most $35\sigma$ conjugacy classes under $G_0$, when $n<32$.

\medskip

\textbf{Conclusion.} Continue assuming that the automorphism group of $G$ is of generic type (that is, $\aut(G)=\mathrm{P\Gamma}(V,\form)$). Suppose first that $n\ge 32$, so that $\sigma\le n$ for all types of $G$. Using this estimate for $\sigma$, we collect into Table~\ref{table:final_bounds} the partial results obtained for each Aschbacher family $\geomclass_i$ and subfamily $\sclass_i$. Adding up the partial results, it is easy to check computationally that the bound given in the statement of the theorem holds for $n\ge 32$.

\begin{table}[hbt]
\[
\begin{array}{rl}
\toprule
\text{family} & \text{bound} \\
\otoprule
\text{subfield family $\geomclass_5$} & n(\log_2\log_2 q+1) \\
\text{other geometrical families} & \frac{3}{2}n^2+8n\log_2 n+8n \\
\midrule
\sclass_1 & 2n^4+4.33n^{3.5}\ln(4n)+14n^{3.5} \\
\sclass_2 & 1988n \\
\sclass_3 & 15n^2 \\
\sclass_4 & \log_2\log_2 n+2 \\
\sclass_5 & 2n^{5.14}+4n^{3.84}+3n^{3.5}+13n \\
\bottomrule
\end{array}
\]
\caption{Upper bounds for the number of conjugacy classes of different types of maximal subgroups of classical groups}\label{table:final_bounds}
\end{table}

For smaller values of $n$, we apply the other bound obtained for $\sclass_5$, namely $35\sigma$. For $n\ge 8$, we still have $\sigma\le n$, so we may substitute $35n$ in the place of the expressing in the last row of Table~\ref{table:final_bounds}. Adding up, we see that the bound given in the statement of the theorem holds for $8\le n<32$. Finally, P.~Kleidman has in his PhD thesis (\cite{KleidmanThesis}) determined all maximal subgroups of classical groups in dimension at most 11, and from this we can read that the bound given in the statement holds even for $n<8$.

It only remains to check the groups with exceptional automorphisms. For this, we use Lemma~\ref{lemma:exceptional_automorphisms}. The number of conjugacy classes of maximal subgroups of subfield type (family~$\geomclass_5$) is bounded by $n(\log_2\log_2 q+1)$, just like in the general case. Apart from these classes, the lemma gives 5 additional classes for $G_0=\Sp_4(2^k)$, and 44 classes for $G_0=\POmega_8^+(q)$. Evidently, the statement of the theorem holds in this case as well. This concludes the proof.
\end{proof}

\bibliographystyle{plain}
\bibliography{elsevierManuscript}

\end{document}